\newtheorem{theorem}{Theorem}[section]
\newtheorem{lemma}[theorem]{Lemma}
\newtheorem{proposition}[theorem]{Proposition}
\newtheorem{definition}[theorem]{Definition}
\newtheorem{corollary}[theorem]{Corollary}
\newtheorem{example}[theorem]{Example}
\numberwithin{equation}{section}
\def\({\bigl(}                \def\){\bigr)}
                        \def\^{\tilde}
\def\1{1\!\!1}
\title{Dynamical properties of $S$-gap shifts and other shift spaces}
\author{
Simon Baker\\
School of Mathematics\\
University of Manchester, Manchester, M13 9PL, UK.\\
email: simonbaker412@gmail.com
\\ \\and\\ \\
Andrei E. Ghenciu\\
Department of Mathematics, Statistics and Computer Science\\
University of Wisconsin Stout, Menomonie, WI, 54751, USA\\
email: ghenciue@uwstout.edu
\thanks{
The second author would like to thank 
the University of Alaska --- Fairbanks for its hospitality during
his stays while part of this paper was written.}
}
\date{\today}
\begin{document}

\maketitle

\begin{abstract}
We study the dynamical properties of certain shift spaces. To help study these properties we introduce two new classes of shifts, namely boundedly supermultiplicative (BSM) shifts and balanced shifts. It turns out that any almost specified shift is both BSM and balanced, and any balanced shift is BSM. However, as we will demonstrate, there are examples of shifts which are BSM but not balanced. We also study the measure theoretic properties of balanced shifts. We show that a shift space admits a Gibbs state if and only if it is balanced.

Restricting ourselves to $S$-gap shifts, we relate certain dynamical properties of an $S$-gap shift to combinatorial properties from expansions in non-integer bases. This identification allows us to use the machinery from expansions in non-integer bases to give straightforward constructions of $S$-gap shifts with certain desirable properties. We show that for any $q\in(0,1)$ there is an $S$-gap shift which has the specification property and entropy $q$. We also use this identification to address the question, for a given $q\in(0,1),$ how many $S$-gap shifts exist with entropy $q?$ For certain exceptional values of $q$ there is a unique $S$-gap shift with this entropy.
\end{abstract}

\section{Introduction}
In this paper we study the dynamical properties of certain shift spaces. We begin by giving a basic overview. For a more thorough account we refer the reader to \cite{LinMar}. A finite or countably infinite set, usually denoted $\mathcal{A}$, is called the \textbf{alphabet}. The elements of $\mathcal{A}$ will be called \textbf{letters}.
\begin{definition}
The \textbf{full $\mathcal{A}$-shift} is
$$\mathcal{A^{\mathbb{Z}}}:= \{ x = (...,x_{-1},x_0, x_1, ..., x_i,...): x_i \in \mathcal{A}\}$$
\end{definition}
The full $\mathcal{A}$-shift is just the collection of all bi-infinite sequences with letters from $\mathcal{A}$. A shift space and the shift map are defined as follows.
\begin{definition}
A \textbf{shift space} is a subset $X$ of $\mathcal{A^{\mathbb{Z}}}$ such that $$X:= X_{\mathcal{F}}=\Big\{x\in A^{\mathbb{Z}}: \textrm{ for all }n\in\mathbb{Z}, k\in\mathbb{N} \textrm{ we have }(x_{n},x_{n+1},\ldots,x_{n+k})\notin \mathcal{F}\Big\}.$$ For some collection $\mathcal{F}$ of forbidden blocks over $\mathcal{A}$. The \textbf{shift map} $\sigma:X\to X$ is defined on any shift space according to the rule $\sigma(x)=y$ where $y_i=x_{i+1}$ for each $i\in\mathbb{Z}.$
\end{definition}A \textbf{shift of finite type} is a shift space where the set of forbidden blocks $\mathcal{F}$ is finite. We can equip shift spaces with a metric. The metric captures the idea that points are close when large central blocks of their coordinates agree. If $x,y \in \mathcal{A^{\mathbb{Z}}}$ we put:
$$ d(x,y)=2^{-k}$$
where $k$ is maximal so that $(x_{-k},\ldots,x_{k})=(y_{-k},\ldots,y_{k}).$ A subset $X$ of $\mathcal{A^{\mathbb{Z}}}$ is a shift space if and only if it is shift-invariant and compact.
A key concept in the general theory of shift spaces is the concept of entropy, which we define next.
\begin{definition}
Let $X$ be a shift space over a finite alphabet. The \textbf{entropy} of $X$ is defined by:
$$h(X):= \lim_{n \to \infty} \frac{1}{n} \log |{\cal B}_n(X)|,$$
where ${\cal B}_n(X)$ is the number of $n$-blocks (blocks of length $n$) appearing in elements of $X$.
\end{definition}If $X$ has alphabet $\mathcal{A}$, then $h(X) \leq \log |\mathcal{A}|$. Moreover, if $X \ne \emptyset$ then $|{\cal B}_n(X)| \geq 1,$ and so if $X \ne \emptyset$ then $0\leq  h(X) \leq \log |\mathcal{A}|.$ We let ${\cal B}(X):=\cup_{n=1}^{\infty}{\cal B}_{n}(X).$ Often ${\cal B}$ is referred to as the language of $X.$ Here and throughout $\log$ is the logarithm to the base $2$.

Recently there has been an increased interest in shifts spaces that have the specification or almost specification property, see \cite{CliTho} and \cite{DasJan1}. Almost specified shifts are a larger class than the class of irreducible sofic shifts, and among many important properties they share, is the fact that they are intrinsically ergodic (there exists a unique measure of maximal entropy).
\begin{definition}
Let $X$ be a shift space. We say that $X$ has the \textbf{specification property} if there exists a positive integer $N \geq 1$ such that
for all $u,v \in {\cal B} (X)$, there exists $\omega \in {\cal B}_N(X)$ with $u\omega v \in {\cal B} (X)$. We say that $X$ has the \textbf{almost specification property} (or $X$ is almost specified) if there exists a positive integer $N \geq 1$ such that
for all $u,v \in {\cal B} (X)$, there exists $\omega \in {\cal B}(X)$, with $|\omega| \leq N,$ and $u\omega v \in {\cal B} (X)$.
\end{definition}
Both the specification property and the almost specification property are useful tools in describing the dynamical properties of certain shift spaces. One of the goals of this paper is to further the understanding of these dynamical properties. With this goal in mind we introduce two new classes of shift spaces, namely boundedly supermultiplicative (BSM) shifts and balanced shifts.
\begin{definition}
A shift $X$ is called \textbf{boundedly supermultiplicative} if there exists $K\geq1$ such that
$$|{\cal B}_m(X)|\cdot|{\cal B}_n(X)| \leq K |{\cal B}_{m+n}(X)|$$
for every $m,n\geq1$. A shift is called \textbf{essentially boundedly supermultiplicative} if there exists
a subshift of the same entropy that is boundedly supermultiplicative.
\end{definition}

\begin{definition}
A shift space $X$ is \textbf{balanced} if there exists $B > 0,$ so that for every word $\omega\in {\cal B}(X)$, we have
 $$\frac{|{\cal B}_{\omega,r}(X)|}{| {\cal B} _r(X)|} \geq B.$$ Where $r\in\mathbb{N}$ and ${\cal B}_{\omega,r}(X)$ is the set of all words of length $r$ that can follow $\omega$.
\end{definition}

In Section $3$ we study the properties of BSM shifts and balanced shifts. We show that every almost specified shift is both BSM and balanced. We also give an example of a shift that is essentially boundedly supermultiplicative but not boundedly supermultiplicative. From the measure theoretic perspective we show that a shift space is balanced if and only if it admits a Gibbs state. The rest of this paper will be focused on a special class of shift space called an $S$-gap shift.
\begin{definition}
Let $S$ be a non-empty subset of $\mathbb{N}$. We define the \textbf{$S$-gap shift} $X(S)\subseteq \{0,1\}^{\mathbb{Z}}$ to be the set of bi-infinite sequences such that the number of consecutive zeros in a sequence is always an element of $S$.
\end{definition}Clearly an $S$-gap shift is a shift space. A typical point in $X(S)$ has the form:
$$x = ...10^{n_{-1}}10^{n_0}10^{n_1}...,$$ where $n_{j}\in S.$ $S$-gap shifts are well studied, for further details we refer the reader to \cite{DasJan1, DasJan2} and the references therein. In Section $4$ we examine the link between $S$-gap shifts and expansions in non-integer bases.  As we will see, using the machinery of expansions in non-integer bases we obtain very simple proofs of dynamical results for $S$-gap shifts. These results include: for any $q\in (0,1)$ there exists an $S$-gap shift which has the specification property and entropy $q$, for any $q\in (0,\log(\frac{1+\sqrt{5}}{2}))$ there exists a continuum of $S$-gap shifts which have the specification property and entropy $q,$ and the set of $q\in(0,1)$ for which there exists a unique $S$-gap shift with entropy $q$ has Hausdorff dimension $1$.

\section{Preliminaries}\label{prelim}

In this section we outline the necessary preliminaries from $S$-gap shifts and expansions in non-integer bases. We begin by recalling the relevant theory from $S$-gap shifts.
\subsection{$S$-gap shifts}
The following proposition is the first step in connecting $S$-gap shifts and expansions in non-integer bases.
\begin{proposition}
\label{entropy prop}
If $S=\{n_0,n_1,...,n_i,...\}$, then
$h(X(S))=\log (\lambda)$, where $\lambda$ is the unique positive solution to
$$1 = \sum_{i \geq 0} \frac{1}{x ^{n_i +1}}$$
\end{proposition}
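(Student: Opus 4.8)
The plan is to count the $n$-blocks of $X(S)$ directly and extract the exponential growth rate. First I would set up a generating-function (or renewal) count. Every word in $\mathcal{B}(X(S))$ of length $n$ decomposes, after possibly trimming incomplete gaps at the two ends, into a central piece of the form $0^{a}1\,0^{b_1}1\,0^{b_2}\cdots 1\,0^{b_k}1\,0^{c}$ with each $b_j\in S$ and $a,c$ bounded by the trimmed gap on each side. The cleanest route is to count instead the number $f(n)$ of blocks of the form $1\,0^{b_1}1\cdots 0^{b_k}1$ of total length exactly $n$ (i.e.\ words that begin and end with $1$), which satisfies the recursion $f(n) = \sum_{m\in S}f(n-m-1)$ with an appropriate initial condition, the shift by $m+1$ accounting for a block $0^m1$. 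I would then show that $|\mathcal{B}_n(X(S))|$ and $f(n)$ have the same exponential growth rate: on one hand $f(n)\le |\mathcal{B}_n(X(S))|$; on the other, every $n$-block of $X(S)$ sits inside some $1\cdots 1$ block of length at most $n + 2(\max S + 1)$ when $S$ is bounded, and when $S$ is unbounded a short additional argument (padding with a single admissible gap at each end, or using monotonicity of $f$) gives $|\mathcal{B}_n(X(S))| \le C\sum_{j=0}^{J} f(n+j)$ for constants depending only on the first finitely many elements of $S$; either way $\limsup \frac1n\log|\mathcal{B}_n| = \limsup\frac1n\log f(n)$, and the limit exists by (super/sub)multiplicativity.

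Next I would analyze the growth rate of $f$ via its generating function $F(x) = \sum_{n\ge 1} f(n)x^{-n}$. From the renewal recursion one gets, up to a polynomial correction absorbing initial terms, a functional equation of the shape $F(x)\bigl(1 - \sum_{i\ge 0} x^{-(n_i+1)}\bigr) = (\text{entire/bounded piece})$. Writing $g(x) := \sum_{i\ge 0} x^{-(n_i+1)}$, I note $g$ is continuous and strictly decreasing on $(1,\infty)$ (term by term), with $g(x)\to 0$ as $x\to\infty$ and $g(x)\to |S|$ (possibly $+\infty$) as $x\downarrow 1$; hence there is a \emph{unique} $\lambda>1$ with $g(\lambda)=1$ — exactly the $\lambda$ in the statement — and $g(x)<1$ for $x>\lambda$, $g(x)>1$ for $1<x<\lambda$. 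The standard Pringsheim/root-test dichotomy for nonnegative-coefficient Dirichlet-type series then pins the exponential growth rate of $f(n)$ to be exactly $\log\lambda$: for $x>\lambda$ the series $\sum f(n)x^{-n}$ converges (one proves by induction that $f(n)\le c\,x^n$ whenever $g(x)\le 1$, comparing with the recursion), so the growth rate is $\le\log\lambda$; and for $1<x<\lambda$ one shows $\sum f(n)x^{-n}=\infty$ (if it converged the functional equation would force $0 = \text{positive}$, or directly: $f$ cannot be dominated by $x^n$ since then summing the recursion gives a contradiction with $g(x)>1$), so the growth rate is $\ge \log\lambda$. Combining with the previous paragraph, $h(X(S)) = \lim\frac1n\log|\mathcal{B}_n(X(S))| = \log\lambda$.

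The main obstacle I anticipate is the case $S$ infinite (and the degenerate cases $S=\{0\}$ or $S$ finite, which should be checked separately or seen to be covered): when $S$ is unbounded there is no uniform bound on the size of a single gap, so the reduction of arbitrary $n$-blocks to $1\cdots 1$-blocks is not by a bounded additive constant, and one must argue more carefully that the "tail" contributions $\sum_{i: n_i \ge n} x^{-(n_i+1)}$ are negligible and that trimming/extending at the ends costs only a subexponential factor. I would handle this by truncating $S$ to $S_N = S\cap[0,N]$, noting $X(S_N)\subseteq X(S)$ gives $h(X(S_N))\le h(X(S))$ with $h(X(S_N))=\log\lambda_N$ where $g_N(\lambda_N)=1$ for the finite sum $g_N$, and $\lambda_N\uparrow\lambda$ by monotone convergence $g_N\uparrow g$; the matching upper bound $h(X(S))\le\log\lambda$ comes from the convergence estimate $f(n)\le c\,x^n$ for every $x>\lambda$, which only uses $g(x)\le 1$ and is insensitive to whether $S$ is finite. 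The finite/bounded case is then just the clean generating-function argument with a genuine rational $F(x)$ whose dominant pole is the largest root of $x^{N}g_N(x)=x^N$, namely $\lambda$.
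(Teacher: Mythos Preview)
Your approach is correct and genuinely different from the paper's. The paper does not give a self-contained proof: it cites \cite{DasJan2} for the case where $S$ is finite or cofinite, and then reduces an arbitrary $S$ to the cofinite case by observing that $|h(X(S)) - h(X(S\cup\{N,N+1,\ldots\}))| < \varepsilon$ for $N$ large. In other words, the paper approximates $S$ from above by cofinite sets. You instead give a direct renewal/generating-function argument: you identify the growth rate of the ``anchored'' count $f(n)$ via the functional equation $F(x)(1-g(x))=x^{-1}$, prove the upper bound $f(n)\le c\,x^n$ for $x>\lambda$ by a clean induction using only $g(x)\le 1$, and for the lower bound you approximate $S$ from below by the finite sets $S_N=S\cap[0,N]$, using $\lambda_N\uparrow\lambda$.

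What each buys: your route is essentially self-contained and makes the role of the equation $g(\lambda)=1$ transparent; it also yields the explicit inequality $f(n)\le c\,x^n$ uniformly in $S$, which is slightly more than the paper's sketch provides. The paper's route is shorter on the page because it outsources the base case, and its cofinite approximation has the pleasant feature that both the upper and lower bounds come from the same known formula applied to nearby shifts. One small point worth tightening in your write-up: the claimed bound $|\mathcal{B}_n|\le C\sum_{j=0}^{J} f(n+j)$ is not quite the right shape when $S$ is unbounded; the correct bookkeeping is $|\mathcal{B}_n|\le 1+\sum_{k=1}^{n}(n-k+1)f(k)$, which still gives $|\mathcal{B}_n|\le \text{poly}(n)\cdot x^n$ for every $x>\lambda$ and hence the desired upper bound on the entropy.
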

Proposition \ref{entropy prop} is a well known result. However a detailed proof of this result is almost impossible to find. The most cited reference is \cite{LinMar}, where the statement appears as an exercise. The other widely cited reference is \cite{Spandl}, section 7, but there appears to be an error in the proof (line 5 page 153). For a proof of this theorem in the case when the set $S$ is either finite or cofinite we refer the reader to \cite{DasJan2}. To generalise this result to an arbitrary $S$ the key ingredient is that for any $\epsilon > 0$ there exists $N\in\mathbb{N}$ so that $|h(X(S))-h(X(S \bigcup \{N,N+1,N+2,...\}))| < \epsilon$. This implies Proposition \ref{entropy prop}.

The following proposition follows from the work of \cite{DasJan1}. It links desirable dynamical properties of an $S$-gap shift with certain combinatorial properties of the set $S.$

\begin{proposition}
\label{prop1}
Let $S\subseteq \mathbb{N}$. The following results hold:

\begin{enumerate}
  \item $X(S)$ is a shift of finite type if and only if $S$ is finite or cofinite.
  \item $X(S)$ is almost specified if and only if $\sup_i|n_{i+1}-n_i| < \infty.$
  \item $X(S)$ is mixing if and only if gcd$\{n+1: n \in S \} = 1$.
  \item $X(S)$ has the specification property if and only if $\sup_i|n_{i+1}-n_i| < \infty$ and gcd$\{n+1: n \in S \} = 1$.
\end{enumerate}
\end{proposition}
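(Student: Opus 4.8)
The plan is to prove each of the four equivalences by a direct analysis of the language $\mathcal B(X(S))$. Write $S=\{n_0<n_1<\cdots\}$. The finite-$S$ case is degenerate — $X(S)$ is then an SFT and all four claims reduce to a computation on a finite presenting graph (for instance, a mixing SFT has the specification property) — so I will concentrate on $S$ infinite. The structural fact I use repeatedly is that a finite word $w$ lies in $\mathcal B(X(S))$ if and only if every maximal run of $0$'s in $w$ that is flanked by a $1$ on both sides has length in $S$; the leading and trailing $0$-runs are unconstrained, since $S$ is infinite and so any $0$-block is a prefix (and a suffix) of $0^k$ for some $k\in S$. For part (1): if $S$ is cofinite, only finitely many blocks $10^k1$ are illegal, so $X(S)$ is an SFT; conversely, if $X(S)$ is an $M$-step SFT (so $uv,vw\in\mathcal B(X(S))$ with $|v|\ge M$ implies $uvw\in\mathcal B(X(S))$), then for all $i,j\ge 0$ the words $10^{j+M}$ and $0^{M+i}1$ both lie in $\mathcal B(X(S))$ and overlap in $0^M$, forcing $10^{j+M+i}1\in\mathcal B(X(S))$, i.e.\ $j+M+i\in S$; letting $i,j$ vary shows every integer $\ge M$ lies in $S$, so $S$ is cofinite.

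For part (2), if $\sup_i|n_{i+1}-n_i|=D<\infty$ then for every $\ell$ the least element of $S$ exceeding $\ell$ is at most $\ell+\max(n_0,D)$; so given $u,v\in\mathcal B(X(S))$, I would bridge the trailing $0$-run of $u$ and the leading $0$-run of $v$ by a single $0$-block chosen so that the interior gap thereby created is that least element of $S$, which costs at most $N:=\max(n_0,D)$ symbols (and when $u$ or $v$ has no $1$ the bridging is even easier). For the converse I would argue by contradiction: if the gaps are unbounded, pick consecutive $n_j<n_{j+1}$ with $n_{j+1}-n_j$ exceeding the almost-specification constant $N$ by more than $1$, and apply almost specification to $u=10^{n_j+1}$ and $v=1$; since the first interior gap of $u\omega v$ has length at least $n_j+1$ it must in fact have length at least $n_{j+1}$, forcing $|\omega|\ge n_{j+1}-n_j-1>N$, a contradiction.

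For parts (3) and (4) the main tool is the elementary fact that if $G:=\{n+1:n\in S\}$ has $\gcd G=1$, then some finite subset of $G$ already has gcd $1$, so the numerical semigroup generated by $G$ contains every integer $\ge$ some $F=F(S)$. For (3)$\Leftarrow$, given $u,v$ I would first extend them so that $u$ ends and $v$ begins with $1$; the $1$'s inside the connecting block $1\omega1$ cut it into $0$-runs whose lengths must lie in $S$, so $\omega$ is admissible precisely when $|\omega|+1$ is a sum of elements of $G$, and hence every sufficiently large connector length is realizable — which is mixing. For (3)$\Rightarrow$, if $d:=\gcd G>1$ then in any word of $\mathcal B(X(S))$ the distance between two $1$'s is a sum of elements of $G$, hence divisible by $d$; applied to $1\omega1$ this gives $|\omega|\equiv -1\pmod d$, so connectors between two $1$'s occur only in one residue class of lengths modulo $d$, which precludes mixing. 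For (4)$\Rightarrow$, specification implies almost specification and hence, by (2), bounded gaps; and feeding the fixed specification constant $N$ into the pairs $(1,1)$ and $(1,0^{d-1}1)$ yields $N\equiv -1$ and $N\equiv 0\pmod d$, forcing $d=1$. For (4)$\Leftarrow$, I would combine both mechanisms: using boundedness of the gaps, extend $u$ on the right and $v$ on the left, at a cost of at most $\max(n_0,D)+1$ symbols each, to words $u^{+}$ ending in $1$ and $v^{-}$ beginning in $1$; then $u^{+}\omega_0v^{-}\in\mathcal B(X(S))$ for every $|\omega_0|\ge F-1$, so a single $N$ larger than $F-1+2(\max(n_0,D)+1)$ works once $\omega_0$ is padded to soak up the two extension lengths.

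I expect part (4)$\Leftarrow$ to be the main obstacle. Parts (1)--(3) each isolate a single phenomenon, whereas here the bounded-gap and coprimality mechanisms must be run simultaneously, and — crucially — the $u,v$-dependent overhead of reducing to words that end, resp.\ begin, with $1$ must be absorbed into \emph{one} uniform gluing length $N$. Producing that uniform constant, rather than a bound depending on $u$ and $v$ (which would only yield \emph{almost} specification), is exactly the delicate point, and it is where the full hypothesis $\sup_i|n_{i+1}-n_i|<\infty$ — not just the weaker consequence used in part (2) — is needed. A secondary nuisance throughout is the bookkeeping of degenerate cases (words containing no $1$, the empty word, and finite $S$), which I would dispatch separately rather than fold into the main arguments.
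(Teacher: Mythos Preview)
Your proposal is correct, and in fact it goes further than the paper does: the paper does not prove this proposition at all but simply attributes it to Dastjerdi and Jangjoo \cite{DasJan1}. So there is no ``paper's proof'' to compare against, only the cited reference.

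That said, your direct argument is sound. The structural observation that interior $0$-runs are constrained while boundary runs are free (for infinite $S$) is exactly the right handle, and each of the four parts is dispatched cleanly. A few small remarks: in part~(3)$\Leftarrow$ your word ``extend'' is doing the work of a reduction --- what you really use is that if $u^{+}$ ends in $1$ and $v^{-}$ begins in $1$, then admissible connectors of every length $\ge F-1$ exist, and the passage back to the original $u,v$ shifts this threshold by an amount depending on $u,v$; since mixing allows a $(u,v)$-dependent threshold, that is fine. In part~(4)$\Leftarrow$ you have correctly identified the crux: the extensions $\alpha,\beta$ have $u,v$-dependent lengths, but those lengths are \emph{uniformly} bounded by $C=\max(n_0,D)+1$, so choosing $|\omega_0|=N-|\alpha|-|\beta|$ with $N\ge F-1+2C$ gives a connector of the exact required length $N$ for every pair. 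Your contradiction in~(2) and the residue-class argument in~(3)$\Rightarrow$ and~(4)$\Rightarrow$ are standard and correct. The degenerate cases (finite $S$, all-zero words) are genuine but routine, as you note.
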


\subsection{Expansions in non-integer bases}
We now establish the necessary preliminaries from expansions in non-integer bases. Let $\lambda\in(1,2)$ and $I_{\lambda}:=[0,\frac{1}{\lambda-1}].$ Given $x\in \mathbb{R}$ we say that a sequence $(a_{j})_{j=1}^{\infty}\in\{0,1\}^{\mathbb{N}}$ is a \textbf{$\lambda$-expansion} for $x$ if $$\sum_{j=1}^{\infty}\frac{a_{j}}{\lambda^{j}}=x.$$ It is a straightforward exercise to show that $x$ has a $\lambda$-expansion if and only if $x\in I_{\lambda}.$ Expansions in non-integer bases were pioneered in the papers of Parry \cite{Parry} and R\'{e}nyi \cite{Renyi}. One of the appealing features of expansions in non-integer bases is that an $x\in I_{\lambda}$ typically has many $\lambda$-expansions. In fact, for any $\lambda\in(1,2)$ almost every $x\in I_{\lambda}$ has a continuum of $\lambda$-expansions, see \cite{Sid}. Here and hereafter almost every is meant with respect to Lebesgue measure. Given the above, it is natural to introduce the following set $$\Sigma_{\lambda}(x)=\Big\{(a_j)_{j=1}^{\infty}\in \{0,1\}^{\mathbb{N}}: \sum_{j=1}^{\infty}\frac{a_{j}}{\lambda^{j}}=x\Big\}.$$ Let us fix the maps $T_{0}(x)=\lambda x$ and $T_{1}(x)=\lambda x -1$. We now introduce another set $$\Omega_{\lambda}(x)=\Big\{(T_{a_{j}})_{j=1}^{\infty}\in \{T_{0},T_{1}\}^{\mathbb{N}}:  (T_{a_{k}}\circ \cdots \circ T_{a_{1}})(x)\in I_{\lambda}\textrm{ for all } k\in\mathbb{N}\Big\}.$$ In \cite{Baker} the following lemma was shown to hold.
\begin{lemma}
\label{Bijection lemma}
$\textrm{card }\Sigma_{\lambda}(x)=\textrm{card }\Omega_{\lambda}(x).$ Where our bijection identifies $(a_{j})_{j=1}^{\infty}$ with $(T_{a_{j}})_{j=1}^{\infty}.$
\end{lemma}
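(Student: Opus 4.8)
The plan is to verify directly that the map $\Phi$ sending $(a_j)_{j=1}^\infty$ to $(T_{a_j})_{j=1}^\infty$ is a well-defined bijection from $\Sigma_\lambda(x)$ onto $\Omega_\lambda(x)$. The crucial computational fact underlying everything is the following identity: if $(a_j)_{j=1}^\infty \in \{0,1\}^{\mathbb N}$ and we set $x_0 = x$ and $x_k = (T_{a_k}\circ\cdots\circ T_{a_1})(x)$, then a straightforward induction on $k$ gives
$$x_k = \lambda^k\left(x - \sum_{j=1}^k \frac{a_j}{\lambda^j}\right).$$
Indeed the base case is trivial, and $x_{k+1} = T_{a_{k+1}}(x_k) = \lambda x_k - a_{k+1} = \lambda^{k+1}(x - \sum_{j=1}^k a_j\lambda^{-j}) - a_{k+1} = \lambda^{k+1}(x - \sum_{j=1}^{k+1} a_j\lambda^{-j})$, completing the induction. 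Equivalently, $x - \sum_{j=1}^k a_j \lambda^{-j} = \lambda^{-k} x_k$.

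First I would show $\Phi$ maps $\Sigma_\lambda(x)$ into $\Omega_\lambda(x)$. Suppose $(a_j)_{j=1}^\infty \in \Sigma_\lambda(x)$, so $\sum_{j=1}^\infty a_j\lambda^{-j} = x$. Fix $k\in\mathbb N$. Then $\lambda^{-k}x_k = x - \sum_{j=1}^k a_j\lambda^{-j} = \sum_{j=k+1}^\infty a_j\lambda^{-j}$, whence $x_k = \sum_{j=k+1}^\infty a_j \lambda^{-(j-k)} = \sum_{i=1}^\infty a_{k+i}\lambda^{-i}$. Since each $a_{k+i}\in\{0,1\}$, this sum lies between $0$ and $\sum_{i=1}^\infty \lambda^{-i} = \frac{1}{\lambda-1}$, so $x_k \in I_\lambda$; this holds for every $k$, so $\Phi((a_j)) \in \Omega_\lambda(x)$. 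Conversely, to see $\Phi$ is surjective onto $\Omega_\lambda(x)$, take any $(T_{a_j})_{j=1}^\infty \in \Omega_\lambda(x)$, which determines a sequence $(a_j)_{j=1}^\infty \in \{0,1\}^{\mathbb N}$; I must check this sequence is a $\lambda$-expansion of $x$. By the identity, $|x - \sum_{j=1}^k a_j\lambda^{-j}| = \lambda^{-k}|x_k| \leq \lambda^{-k}\cdot\frac{1}{\lambda-1}$, and since $\lambda > 1$ the right-hand side tends to $0$ as $k\to\infty$; hence $\sum_{j=1}^\infty a_j\lambda^{-j} = x$, i.e. $(a_j) \in \Sigma_\lambda(x)$ and $\Phi((a_j))$ is the chosen element. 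Finally, injectivity of $\Phi$ is immediate, since the maps $T_0$ and $T_1$ are distinct (they differ by the constant $1$), so the sequence $(T_{a_j})_{j=1}^\infty$ determines $(a_j)_{j=1}^\infty$ uniquely. Since $\Phi$ is a bijection, $\operatorname{card}\Sigma_\lambda(x) = \operatorname{card}\Omega_\lambda(x)$.

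I do not expect any serious obstacle here; the only point requiring a little care is making sure the membership condition "$x_k \in I_\lambda$ for all $k$" is used in the right direction — it is automatic on the $\Sigma_\lambda$ side (tails of the series are bounded by $\frac{1}{\lambda-1}$) but must be invoked, together with $\lambda > 1$, to force convergence of the partial sums to $x$ on the $\Omega_\lambda$ side. Everything else is the elementary induction above plus the observation that $T_0 \neq T_1$.
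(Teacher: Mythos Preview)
Your argument is correct: the inductive identity $x_k=\lambda^k\bigl(x-\sum_{j=1}^k a_j\lambda^{-j}\bigr)$ is exactly what is needed, and you use it cleanly in both directions to verify that $\Phi$ is a well-defined bijection. Note that the paper does not actually supply a proof of this lemma; it merely cites \cite{Baker} for the result. Your self-contained verification is the standard one and would serve perfectly well in place of that citation.
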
The dynamical interpretation of $\Sigma_{\lambda}(x)$ provided by $\Omega_{\lambda}(x)$ and Lemma \ref{Bijection lemma} will help with our exposition and make our proofs more succinct. Indeed, when it comes to constructing a $\lambda$-expansion of $x\in I_{\lambda}$ with certain desirable combinatorial properties, it is often much easier to achieve this goal by studying the trajectories of $x$ under combinations of $T_{0}$ and $T_{1}$.

Let $\mathcal{S}_{\lambda}:=T_{0}^{-1}(I_{\lambda})\cap T_{1}^{-1}(I_{\lambda})=[\frac{1}{\lambda},\frac{1}{\lambda(\lambda-1)}].$ By definition $\mathcal{S}_{\lambda}$ is the set of $x\in I_{\lambda}$ which are mapped into $I_{\lambda}$ under $T_{0}$ and $T_{1}$. Therefore, by Lemma \ref{Bijection lemma}, $\mathcal{S}_{\lambda}$ is the set of $x\in I_{\lambda}$ which have a choice of digit in the first entry of their $\lambda$-expansion. An $x\in I_{\lambda}$ has a unique $\lambda$-expansion if and only if it is never mapped into $S_{\lambda}$ under a finite concatenation of $T_{0}'s$ and $T_{1}'s$. In what follows we refer to the interval $\mathcal{S}_{\lambda}$ as the \textbf{switch region}.

Both Parry \cite{Parry} and R\'{e}nyi \cite{Renyi} studied a specific dynamical system as a method for generating $\lambda$-expansions. In particular, they studied the expansions generated by the map $G:I_\lambda \to I_\lambda$ where
$$G(x) = \Big\{ \begin{array}{ll}
         T_{0}(x) & \mbox{if $x\in [0,\frac{1}{\lambda})$}\\
        T_{1}(x) & \mbox{if $x\in[\frac{1}{\lambda},\frac{1}{\lambda-1}]$.}\end{array}$$
For each $x\in I_{\lambda},$ by repeatedly applying the map $G$ to $x$ and its successive images we construct an element of $\Omega_{\lambda}(x),$ and therefore by Lemma \ref{Bijection lemma} we have constructed an element of $\Sigma_{\lambda}(x).$ The map $G$ is known as the \textbf{greedy map}, and the $\lambda$-expansion of $x$ generated by $G$ is known as the \textbf{greedy expansion}. The greedy expansion is named as such because it is the $\lambda$-expansion generated by the algorithm which always picks the largest digit possible. We observe that given $x\in[0,\frac{1}{\lambda-1})$, for $n$ sufficiently large $G^{n}(x)\in[0,1).$ Moreover, once $x$ is mapped into $[0,1)$ it is never mapped out, i.e., $G^{m}(x)\in [0,1)$ for all $m\geq n.$

Complementing the notion of a greedy expansion is the notion of a \textbf{lazy expansion}. This is the $\lambda$-expansion generated by the algorithm which always picks the smallest digit possible. This expansion is generated by the map $L:I_{\lambda}\to I_{\lambda}$ where
$$L(x) = \Big\{ \begin{array}{ll}
         T_{0}(x) & \mbox{if $x\in [0,\frac{1}{\lambda(\lambda-1)}]$}\\
        T_{1}(x) & \mbox{if $x\in(\frac{1}{\lambda(\lambda-1)},\frac{1}{\lambda-1}]$.}\end{array}.$$ In a similar way to the greedy map, every $x\in(0,\frac{1}{\lambda-1}]$ is mapped into the interval $(\frac{2-\lambda}{\lambda-1},\frac{1}{\lambda-1}],$ and once $x$ is mapped into this interval it is never mapped out. Notice that the functions $G$ and $L$ only differ in the switch region $\mathcal{S}_{\lambda}.$ This is to be expected as we only ever have a choice of digit when $x$ is mapped into $\mathcal{S}_{\lambda},$ and our greedy expansion chooses the largest digit possible and the lazy expansion the smallest digit possible.

The following straightforward result will be important when it comes to applying the theory of expansions in non-integer bases to $S$-gap shifts.

\begin{lemma}
\label{consecutive zeros remark}
Let $x\in I_{\lambda}$ and $(T_{a_{j}})_{j=1}^{\infty}\in \Omega_{\lambda}(x).$ If there exists $\delta >0$ such that $(T_{a_{k}}\circ \cdots \circ T_{a_{1}})(x)>\delta$ for all $k\in\mathbb{N}$. Then $(T_{a_{j}})_{j=1}^{\infty}$ has an upper bound on the number of consecutive $T_{0}'s$ occurring, and by Lemma \ref{Bijection lemma} the corresponding element of $\Sigma_{\lambda}(x)$ also has an upper bound on the number of consecutive zeros occurring.
\end{lemma}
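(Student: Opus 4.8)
The plan is to exhibit an explicit uniform lower bound $\delta$ on the images $(T_{a_k}\circ\cdots\circ T_{a_1})(x)$ forces the number of consecutive applications of $T_0$ to be bounded. The key observation is that $T_0$ is expanding away from $0$: since $T_0(y)=\lambda y$ with $\lambda>1$, applying $T_0$ repeatedly to a point bounded below by $\delta$ produces the values $\lambda\delta,\lambda^2\delta,\lambda^3\delta,\dots$, and these grow without bound. But every point occurring in the trajectory $(T_{a_k}\circ\cdots\circ T_{a_1})(x)$ lies in $I_\lambda=[0,\tfrac{1}{\lambda-1}]$ by the definition of $\Omega_\lambda(x)$. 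So the number of consecutive $T_0$'s cannot be so large that the resulting point exceeds $\tfrac{1}{\lambda-1}$.

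Concretely, I would argue as follows. Suppose $T_{a_{k+1}}=T_{a_{k+2}}=\cdots=T_{a_{k+m}}=T_0$ is a run of $m$ consecutive $T_0$'s in the sequence, and let $y=(T_{a_k}\circ\cdots\circ T_{a_1})(x)$ (or $y=x$ if $k=0$) be the point to which the first of these is applied; by hypothesis $y>\delta$. Then $(T_{a_{k+m}}\circ\cdots\circ T_{a_{k+1}})(y)=\lambda^m y>\lambda^m\delta$, and this point must lie in $I_\lambda$, so $\lambda^m\delta\le\tfrac{1}{\lambda-1}$, giving $m\le\log_\lambda\!\bigl(\tfrac{1}{\delta(\lambda-1)}\bigr)$. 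Hence the length of any run of consecutive $T_0$'s is at most $M:=\lfloor\log_\lambda(\tfrac{1}{\delta(\lambda-1)})\rfloor$, a bound independent of $k$, which is exactly the desired uniform upper bound. The corresponding statement for $\Sigma_\lambda(x)$ then follows immediately from the bijection of Lemma \ref{Bijection lemma}, under which a run of consecutive $T_0$'s corresponds precisely to a run of consecutive zeros in the digit sequence $(a_j)_{j=1}^\infty$.

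There is really no serious obstacle here; the statement is essentially a one-line consequence of the expansivity of $T_0$ combined with the boundedness of $I_\lambda$. The only point requiring a small amount of care is to make sure the bound is applied to an \emph{arbitrary} run of consecutive $T_0$'s, located anywhere in the sequence (not just a run starting at the first coordinate), which is why it is important that the hypothesis $(T_{a_k}\circ\cdots\circ T_{a_1})(x)>\delta$ holds for \emph{all} $k$: this guarantees that whatever point a run of $T_0$'s begins at is itself bounded below by $\delta$. One should also implicitly note that if $\delta\geq\tfrac{1}{\lambda}$ then $M=0$, in which case $T_0$ can never be applied at all consistently with staying in $I_\lambda$ — a degenerate but harmless case. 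I would present the argument in the clean form above, emphasizing the inequality $\lambda^m\delta\le\tfrac{1}{\lambda-1}$ as the heart of the matter.
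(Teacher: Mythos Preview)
Your proof is correct and follows essentially the same approach as the paper: both arguments exploit that $T_0(y)=\lambda y$, so a run of $m$ consecutive $T_0$'s applied to a point $y>\delta$ yields $\lambda^m\delta$, which must remain in $I_\lambda=[0,\tfrac{1}{\lambda-1}]$, bounding $m$. The paper phrases this by contradiction (choosing $N$ with $\lambda^N\delta>\tfrac{1}{\lambda-1}$ and deriving that $N$ consecutive $T_0$'s would force the orbit out of $I_\lambda$), while you give the equivalent direct bound $m\le\log_\lambda\bigl(\tfrac{1}{\delta(\lambda-1)}\bigr)$; these are the same argument.
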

\begin{proof}
Let $x,(T_{a_{j}})_{j=1}^{\infty}$ and $\delta$ be as above. Let $N\in\mathbb{N}$ be such that $\delta \lambda^{N}>\frac{1}{\lambda-1}.$ We now show that $(T_{a_{j}})_{j=1}^{\infty}$ cannot contain more than $N$ consecutive $T_{0}'s.$

Assume that this is not the case and that there exists $k\in \mathbb{N}$ such that $T_{a_{k+i}}=T_{0}$ for $1\leq i\leq N.$ Using the fact that $T_{0}$ expands distances between distinct points by a factor $\lambda,$ and that $0$ is the unique fixed point of $T_{0},$ we observe the following
\begin{align*}
| (T_{0}^{N}\circ T_{a_{k}}\circ \cdots \circ T_{a_{1}})(x) -0|= |(T_{0}^{N}\circ T_{a_{k}}\circ \cdots \circ T_{a_{1}})(x)- T_{0}^{N}(0)|&= \lambda^{N} |(T_{a_{k}}\circ \cdots \circ T_{a_{1}})(x)- 0|\\
&\geq \lambda^{N}\delta\\
& >\frac{1}{\lambda-1}.
\end{align*}Therefore $(T_{0}^{N}\circ T_{a_{k}}\circ \cdots \circ T_{a_{1}})(x)\notin I_{\lambda}$ and we have our desired contradiction.

\end{proof}
The following corollary is an immediate consequence of Lemma \ref{consecutive zeros remark} and our earlier observation that if $x\in (0,\frac{1}{\lambda-1}]$ then $L$ eventually maps $x$ into $(\frac{2-\lambda}{\lambda-1},\frac{1}{\lambda-1}],$ and once $x$ is mapped into this interval it never leaves.

\begin{corollary}
Let $x\in (0,\frac{1}{\lambda-1}]$. The lazy expansion of $x$ contains a bounded number of consecutive zeros.
\end{corollary}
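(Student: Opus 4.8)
The plan is to combine Lemma~\ref{consecutive zeros remark} with the structural fact about the lazy map $L$ stated immediately before the corollary. Concretely, fix $x\in(0,\frac{1}{\lambda-1}]$ and let $(T_{a_j})_{j=1}^\infty\in\Omega_\lambda(x)$ be the element generated by iterating $L$, so that the corresponding $\lambda$-expansion in $\Sigma_\lambda(x)$ (via Lemma~\ref{Bijection lemma}) is precisely the lazy expansion of $x$. The idea is to produce a uniform lower bound $\delta>0$ on the orbit $(T_{a_k}\circ\cdots\circ T_{a_1})(x) = L^k(x)$ valid for all $k\geq 1$, and then quote Lemma~\ref{consecutive zeros remark} verbatim.

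First I would use the observation recalled in the text: every $x\in(0,\frac{1}{\lambda-1}]$ is eventually mapped by $L$ into the interval $(\frac{2-\lambda}{\lambda-1},\frac{1}{\lambda-1}]$, and once there it never leaves. So there is some $n_0=n_0(x)$ with $L^k(x)>\frac{2-\lambda}{\lambda-1}$ for all $k\geq n_0$; since $\lambda<2$ we have $\frac{2-\lambda}{\lambda-1}>0$. This handles the tail of the orbit with the constant $\delta_1:=\frac{2-\lambda}{\lambda-1}$. Then I would deal with the finitely many initial iterates $L(x),\dots,L^{n_0-1}(x)$ separately: each lies in $(0,\frac{1}{\lambda-1}]$ — one should check that $L$ never maps a strictly positive point to $0$, which is clear since $T_0(y)=\lambda y>0$ for $y>0$ and $T_1(y)=\lambda y-1>0$ on the region $(\frac{1}{\lambda(\lambda-1)},\frac{1}{\lambda-1}]$ where $L$ applies $T_1$ (as $\lambda\cdot\frac{1}{\lambda(\lambda-1)}-1=\frac{2-\lambda}{\lambda-1}>0$). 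Hence $\delta_2:=\min_{1\leq k\leq n_0-1}L^k(x)>0$ as a minimum of finitely many positive numbers. Setting $\delta:=\min\{\delta_1,\delta_2\}>0$ gives $(T_{a_k}\circ\cdots\circ T_{a_1})(x)=L^k(x)>\delta$ for every $k\in\mathbb{N}$.

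With this $\delta$ in hand, Lemma~\ref{consecutive zeros remark} applies directly: the sequence $(T_{a_j})_{j=1}^\infty$ contains no more than $N$ consecutive $T_0$'s, where $N$ is any integer with $\delta\lambda^N>\frac{1}{\lambda-1}$, and therefore the lazy expansion of $x$ — the corresponding element of $\Sigma_\lambda(x)$ — contains a bounded number of consecutive zeros. This completes the argument.

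I do not expect a serious obstacle here; the corollary is essentially a packaging of the two ingredients already developed. The one point requiring a small amount of care is verifying that the orbit stays strictly positive, i.e.\ that $L$ cannot collapse a positive point onto the fixed point $0$ of $T_0$ in finitely many steps — this is what lets us take a genuine positive minimum over the initial segment rather than having the infimum be $0$. Everything after that is an immediate citation of Lemma~\ref{consecutive zeros remark}.
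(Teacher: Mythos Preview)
Your proof is correct and follows exactly the same approach as the paper, which simply states that the corollary is an immediate consequence of Lemma~\ref{consecutive zeros remark} together with the observation that $L$ eventually maps $x$ into $(\frac{2-\lambda}{\lambda-1},\frac{1}{\lambda-1}]$ and keeps it there. You have merely spelled out the details (positivity of the finitely many initial iterates and the choice of $\delta$) that the paper leaves implicit.
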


\section{Boundedly Supermultiplicative and Balanced Shifts}\label{Shifts}

In this section we prove our results about BSM and balanced shifts. We begin by recalling the definitions of a BSM shift and a balanced shift.
\begin{definition}
A shift $X$ is called \textbf{boundedly supermultiplicative} if there exists $K\geq1$ such that $$|{\cal B}_m(X)|\cdot|{\cal B}_n(X)| \leq K |{\cal B}_{m+n}(X)|$$
for every $m,n\geq1$. A shift is called \textbf{essentially boundedly supermultiplicative} if there exists a subshift of the same entropy that is boundedly supermultiplicative.
\end{definition}
\begin{definition}
A shift space $X$ is \textbf{balanced} if there exists $B > 0,$ so that for every word $\omega$, we have $$\frac{|{\cal B}_{\omega,r}(X)|}{| {\cal B} _r(X)|} \geq B.$$ Where $r\in\mathbb{N}$ and ${\cal B}_{\omega,r}(X)$ is the set of all words of length $r$ that can follow $\omega$.
\end{definition}
\subsection{Examples and main properties}
In this subsection we examine some examples and prove some results on BSM and balanced shifts. If turns out that every balanced shift is BSM, and every almost specified shifts is both BSM and balanced.

Our first example shows that there exist essentially boundedly supermultiplicative shifts that are not boundedly supermultiplicative.
\begin{example}
\emph{We start with an alphabet of four letters $A=\{a,b,c,d\}$. Let $X$ be the shift space where the set of forbidden blocks is ${\cal F}=\{ac,ad,bd,ca,cb,da,db\}$. Let $C_1 = \{a,b\}$ and let $C_2 = \{c,d\}$. Given the definition of ${\cal F}$, if a word has a letter from $C_2$, that letter can only be followed by a letter from $C_2$. Also, the block $bc$ can only appear once in an admissible word.\\
 There are $2^n$ words with letters exclusively from $C_1$ and $2^n$ words with letters exclusively from $C_2$. The number of words of length $n$ containing the  word $bc$ is $(n-1)2^{n-2}$. Thus, the total number of words of length $n$ is $2^{n+1}+(n-1)2^{n-2} = (n+7) 2^{n-2}.$ A simple calculation then shows that the shift is not BSM and it has entropy $\log(2)$. $X$ is essentially boundedly supermultiplicative since the subshift corresponding to $C_1$ has entropy $\log(2)$ .}
\end{example}
Next, we examine an example of a sofic shift that is not a subshift of finite type.
\begin{definition}
The even shift is the subshift of $\{0,1\}^{\mathbb{Z}_+}$ comprising all binary sequences
such that there is an even number of 0's between any two successive 1's.
\end{definition}
\begin{example}[The even shift] We show next that the even shift is boundedly supermultiplicative with $K=4$.
Let $E$ denote the even shift. Let ${\cal B}_{m,0}(E)$ denote the set of words of length $m$ ending in $0,$ and ${\cal B}_{0,m}(E)$ denote the set of words of length $m$ starting with $0$. If we have a word of length $m$ ending in $1$ we can get another word of length $m$ by replacing $1$ with $0$. Similarly, if we have a word of length $m$ starting in $1$ we can get another word of length $m$ by replacing $1$ with $0$. Thus we have
$$|{\cal B}_m(E)|\leq2|{\cal B}_{m,0}(E)|\hspace{1cm}\mbox{ and }\hspace{1cm}|{\cal B}_m(E)|\leq2|{\cal B}_{0,m}(E)|$$
for every $m\geq1$.

Now, let $\omega0$ and $0\alpha$ be arbitrary words in ${\cal B}_{m,0}(E)$ and ${\cal B}_{0,n}(E)$ respectively. Let $k$ be the number of $0$'s that $\omega0$ ends in, and let $l$ be the number of $0$'s $0\alpha$ starts with. We define a map $\tau: {\cal B}_{m,0}(E) \ast {\cal B}_{0,m}(E) \to {\cal B}_{m+n}(E)$ as follows: $\tau((\omega0,0\alpha))=\omega00\alpha$ if $k+l$ is even, $\tau((\omega0,0\alpha))=\omega10\alpha$ if $k$ is odd and $l$ is even, and $\tau((\omega0,0\alpha))=\omega01\alpha$ if $k$ is even and $l$ is odd. The map $\tau$ is injective, so we may deduce:
$$|{\cal B}_{m,0}(E)|\cdot|{\cal B}_{0,n}(E)|\leq|{\cal B}_{m+n}(E)|$$
for all $m,n\geq1$.
 Consequently,
$$|{\cal B}_m(E)|\cdot|{\cal B}_n(E)|\leq2|{\cal B}_{m,0}(E)|\cdot2|{\cal B}_{0,n}(E)|\leq4|{\cal B}_{m+n}(E)|.$$
for all $m,n \geq 1$.
In conclusion, $E$ is BSM, with $K=4$.
\end{example}
We now prove that every almost specified shift is BSM and balanced. We begin by showing that every balanced shift is BSM.
\begin{proposition}
\label{BalBSM prop}
Every balanced shift is BSM.
\begin{proof}
If $X$ is balanced, there exists $B>0$ so that, for every $\omega$ in ${\cal B}(X)$ and every $r$ we have $|{\cal B}_{\omega,r}(X)| \geq B|{\cal B}_r(X)|$. Now let $A_{\omega,r}= \{\omega \rho | \rho \in {\cal B}_{\omega,r}(X) \}$. Thus $|A_{\omega,r}| \geq B|{\cal B}_r(X)|$. Then for every $m,n \geq 1$ we have
$$|{\cal B}_{m+n}(X)| = \Sigma_{\omega \in {\cal B}_m(X)}|A_{\omega,n}| \geq \Sigma_{\omega \in {\cal B}_m(X)}B|{\cal B}_n(X)|=B |{\cal B}_m(X)|
|{\cal B}_m(X)|.$$
This proves that $X$ is BSM.
\end{proof}
\end{proposition}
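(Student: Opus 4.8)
The plan is to exploit the balanced hypothesis directly, reading it as a uniform lower bound on the number of length-$n$ extensions of each length-$m$ word, and then to recover the supermultiplicative inequality by decomposing ${\cal B}_{m+n}(X)$ according to prefixes. First I would fix the balancing constant $B > 0$ supplied by the hypothesis, so that $|{\cal B}_{\omega,r}(X)| \geq B\,|{\cal B}_r(X)|$ for every admissible word $\omega$ and every $r\in\mathbb{N}$. I would record one preliminary observation: since any $\rho \in {\cal B}_{\omega,r}(X)$ is itself an admissible length-$r$ block (every subblock of the admissible block $\omega\rho$ is admissible), one has ${\cal B}_{\omega,r}(X) \subseteq {\cal B}_r(X)$, whence $B \leq 1$. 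This guarantees that the constant $K := 1/B$ produced below satisfies $K \geq 1$, as demanded in the definition of BSM.

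The heart of the argument is a counting identity obtained by splitting each word of length $m+n$ into its first $m$ and its last $n$ letters. Given $w \in {\cal B}_{m+n}(X)$, write $w = \omega\rho$ with $|\omega| = m$ and $|\rho| = n$. Then $\omega \in {\cal B}_m(X)$ and $\rho \in {\cal B}_{\omega,n}(X)$; conversely, each admissible prefix $\omega$ together with each of its admissible extensions $\rho$ yields a distinct element of ${\cal B}_{m+n}(X)$, since distinct $\omega$ force distinct prefixes. Grouping the words of length $m+n$ by their length-$m$ prefix therefore gives the exact decomposition
$$|{\cal B}_{m+n}(X)| = \sum_{\omega \in {\cal B}_m(X)} |{\cal B}_{\omega,n}(X)|.$$

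Applying the balanced bound to each summand, I obtain $|{\cal B}_{m+n}(X)| \geq \sum_{\omega \in {\cal B}_m(X)} B\,|{\cal B}_n(X)| = B\,|{\cal B}_m(X)|\,|{\cal B}_n(X)|$ for all $m,n \geq 1$. Rearranging yields $|{\cal B}_m(X)|\,|{\cal B}_n(X)| \leq (1/B)\,|{\cal B}_{m+n}(X)|$, so that $X$ is BSM with $K = 1/B$.

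There is no genuine obstacle here; the only points requiring care are verifying that the prefix grouping is a true partition — so that the sum over $\omega$ counts each word of ${\cal B}_{m+n}(X)$ exactly once, giving an equality rather than merely an inequality — and checking that the balanced bound may be applied with the \emph{same} constant $B$ simultaneously to every prefix $\omega$. Both are immediate from the definitions, the second being precisely the uniformity built into the notion of a balanced shift.
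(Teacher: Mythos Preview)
Your proof is correct and follows essentially the same approach as the paper: both decompose $|{\cal B}_{m+n}(X)|$ as a sum over length-$m$ prefixes of the number of admissible length-$n$ extensions, then apply the balanced lower bound termwise. Your additional remark that $B\le 1$ (so that $K=1/B\ge 1$) is a nice detail the paper omits, but otherwise the arguments are identical.
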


\begin{theorem}
\label{theorem 1}
Every almost specified shift is BSM and balanced.
\begin{proof}
By Proposition \ref{BalBSM prop} it suffices to show that every almost specified shift is balanced. Let $\omega$ be an admissible word. Let $N$ be as in the definition of almost specified. Thus, for our word $\omega$ there exists a word of length at most $N$ connecting $\omega$ with any given word of length $r$. So we can write $$|{\cal B}_{\omega,r}(X)| + |{\cal B}_{\omega,r+1}(X)| + \cdots + |{\cal B}_{\omega,r+N}(X)| \geq |{\cal B}_r(X)|.$$
At the same time we have $$|{\cal B}_{\omega,r}(X)| (1+ |{\cal B}_1(X)| + \cdots + |{\cal B}_N(X)|) \geq |{\cal B}_{\omega,r}(X)| + |{\cal B}_{\omega,r+1}(X)| + |{\cal B}_{\omega,r+N}(X)|.$$
In conclusion
$$\frac{|{\cal B}_{\omega,r}(X)|}{|{\cal B}_r(X)|} \geq (1+ |{\cal B}_1(X)| + \cdots + |{\cal B}_N(X)|)^{-1},$$ and therefore $X$ is balanced.
\end{proof}
\end{theorem}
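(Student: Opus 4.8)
The plan is to reduce everything to the single claim that every almost specified shift is balanced, since Proposition \ref{BalBSM prop} already records that a balanced shift is BSM. So I would fix an almost specified shift $X$, together with a constant $N\geq 1$ witnessing almost specification, and then fix an arbitrary word $\omega\in{\cal B}(X)$ and an arbitrary $r\in\mathbb{N}$. The target is a lower bound for $|{\cal B}_{\omega,r}(X)|/|{\cal B}_r(X)|$ by a positive constant that depends only on $N$ and on $|{\cal B}_1(X)|,\dots,|{\cal B}_N(X)|$, and hence not on $\omega$ or $r$; finiteness of the alphabet guarantees those quantities are finite, so such a constant is genuinely positive.

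The first step is to establish $\sum_{j=0}^{N}|{\cal B}_{\omega,r+j}(X)|\geq|{\cal B}_r(X)|$, with the convention that ${\cal B}_0(X)$ is the singleton consisting of the empty word. For each $v\in{\cal B}_r(X)$, almost specification supplies a word $\eta_v$ of length $j_v\leq N$ with $\omega\eta_v v\in{\cal B}(X)$, so $\eta_v v\in{\cal B}_{\omega,r+j_v}(X)$. Grouping the words $v$ according to the value $j_v\in\{0,1,\dots,N\}$, the assignment $v\mapsto\eta_v v$ is injective on each group, because $v$ is recovered as the last $r$ letters of $\eta_v v$; hence the number of $v$ with $j_v=j$ is at most $|{\cal B}_{\omega,r+j}(X)|$, and summing over $j$ gives the inequality.

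The second step bounds each term on the left: a word in ${\cal B}_{\omega,r+j}(X)$ is the concatenation of its first $r$ letters, which form a word in ${\cal B}_{\omega,r}(X)$, with its last $j$ letters, which form a word in ${\cal B}_j(X)$, so $|{\cal B}_{\omega,r+j}(X)|\leq|{\cal B}_{\omega,r}(X)|\cdot|{\cal B}_j(X)|$. Combining the two steps yields
$$|{\cal B}_r(X)|\leq\sum_{j=0}^{N}|{\cal B}_{\omega,r}(X)|\cdot|{\cal B}_j(X)|=|{\cal B}_{\omega,r}(X)|\Big(1+\sum_{j=1}^{N}|{\cal B}_j(X)|\Big),$$
so that $|{\cal B}_{\omega,r}(X)|/|{\cal B}_r(X)|\geq\big(1+\sum_{j=1}^{N}|{\cal B}_j(X)|\big)^{-1}=:B>0$, which is exactly the balanced condition, and the theorem follows.

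The only genuinely delicate point is the bookkeeping in the second paragraph: although the connecting word $\eta_v$ depends on $v$, once its length $j$ is fixed the map $v\mapsto\eta_v v$ still injects ${\cal B}_r(X)\cap\{v:j_v=j\}$ into ${\cal B}_{\omega,r+j}(X)$, and this is what turns ``every length-$r$ word can be reached from $\omega$ across a gap of size at most $N$'' into an honest counting estimate. Everything else is elementary, and in particular there is no need to invoke BSM separately for almost specified shifts — it comes for free from Proposition \ref{BalBSM prop} once balancedness is in hand.
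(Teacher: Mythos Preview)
Your proof is correct and follows essentially the same approach as the paper's: reduce to balancedness via Proposition~\ref{BalBSM prop}, bound $|{\cal B}_r(X)|$ by $\sum_{j=0}^{N}|{\cal B}_{\omega,r+j}(X)|$ using almost specification, and then bound each summand by $|{\cal B}_{\omega,r}(X)|\cdot|{\cal B}_j(X)|$ to extract the common factor. Your write-up is in fact more careful than the paper's, making explicit the injectivity of $v\mapsto\eta_v v$ within each length class and the prefix/suffix decomposition underlying the second inequality.
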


Given Proposition \ref{BalBSM prop} it is natural to ask whether there exists BSM shifts that are not balanced. This turns out to be the case as we now show. The $S$-gap shift and the $\beta$-shift will be our main examples of BSM shifts that are not balanced. An example of a balanced shift that is not almost specified is at this point an open question.

\begin{example}[$S$-gap shift]
\emph{Let $S$ be a subset of the positive integers so that $S$ has unbounded gaps. Then the corresponding $S$-gap shift is not balanced. Let $S = \{n_0, n_1, ...,n_k,...\}$. Given that $S$ has unbounded gaps, we can find a subset of $S$, call it $W= \{ n_{i_0},n_{i_0+1},n_{i_1},n_{i_1+1},....,n_{i_k},n_{i_k+1}, ...\},$ so that $n_{i_k+1} -n_{i_k} \to \infty$. Consider now $\omega = 10^{n_{i_k}+1}$. Then $|{\cal B}_{\omega,n_{i_{k+1}}-n_{i_k}}(X)|=1$ and so $\frac{|{\cal B}_{\omega,n_{i_{k+1}}-n_{i_k}}(X)|}{|{\cal B}_{n_{i_{k+1}}-n_{i_k}}(X)|} \to 0$ as $k$ approaches infinity. The fact that any $S$-gap shift is BSM is a direct consequence of Lemma 5.3 from \cite{CliTho}.}
\end{example}
\begin{example}[$\beta$-shift]
\emph{In \cite{Renyi} it was shown that any $\beta$-shift is boundedly supermultiplicative. However, when the expansion of $1$ contains arbitrarily long strings of zeros, the $\beta$-shift is not balanced. This follows from a similar argument to that given above in the case of $S$-gap shifts.}
\end{example}
We end this subsection with a result which demonstrates that the entropy of a BSM shift is ``easier" to compute than the entropy of a general shift space.
For more details on boundedly submultiplicative and supermultiplicative sequences and their normalization see \cite{GheRoy}.
\begin{proposition}
Let $X$ be a boundedly supermultiplicative shift. Then for any $n\geq1$,
$$\frac{\log|{\cal B}_n(X)|-\log K}{n}\leq h(X)\leq\frac{\log|{\cal B}_n(X)|}{n}.$$
\end{proposition}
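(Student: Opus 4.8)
The plan is to derive both inequalities directly from the defining supermultiplicative estimate $|{\cal B}_m(X)|\cdot|{\cal B}_n(X)| \leq K |{\cal B}_{m+n}(X)|$ together with the definition of entropy as the limit $h(X) = \lim_{k\to\infty} \frac{1}{k}\log|{\cal B}_k(X)|$. The upper bound is the easy half: since the blocks of length $k$ injectively map into the admissible words, the sequence $\log|{\cal B}_k(X)|$ is subadditive (a word of length $k+\ell$ restricts to a word of length $k$ followed by a word of length $\ell$), and Fekete's lemma gives $h(X) = \inf_k \frac{\log|{\cal B}_k(X)|}{k} \leq \frac{\log|{\cal B}_n(X)|}{n}$ for every $n\geq 1$.

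For the lower bound, fix $n\geq 1$ and iterate the BSM inequality. First I would show by induction on $j$ that $|{\cal B}_n(X)|^{j} \leq K^{j-1}|{\cal B}_{jn}(X)|$ for all $j\geq 1$: the base case $j=1$ is trivial, and the inductive step applies the defining inequality to $|{\cal B}_{jn}(X)|\cdot|{\cal B}_n(X)| \leq K|{\cal B}_{(j+1)n}(X)|$ and substitutes the inductive hypothesis. Taking logarithms yields $j\log|{\cal B}_n(X)| - (j-1)\log K \leq \log|{\cal B}_{jn}(X)|$. Now divide by $jn$ and let $j\to\infty$; the left-hand side tends to $\frac{\log|{\cal B}_n(X)| - \log K}{n}$, while $\frac{1}{jn}\log|{\cal B}_{jn}(X)| \to h(X)$ since $(jn)_{j\geq 1}$ is a subsequence of $\mathbb{N}$ along which the entropy limit holds. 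This gives $\frac{\log|{\cal B}_n(X)| - \log K}{n} \leq h(X)$, completing the proof.

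I do not expect a genuine obstacle here; the only point requiring a little care is making sure the limit along the arithmetic progression $jn$ is legitimate, which is immediate because the full limit defining $h(X)$ exists (it is guaranteed by subadditivity and Fekete's lemma, as already used for the upper bound). One could alternatively avoid invoking the limit along a subsequence by noting that $h(X) \geq \limsup_j \frac{1}{jn}\log|{\cal B}_{jn}(X)|$ trivially follows from the existence of the full limit; either phrasing works. No additional hypotheses beyond $K\geq 1$ and $X$ nonempty are needed, and $K\geq 1$ ensures the lower bound is not vacuous.
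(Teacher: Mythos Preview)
Your proof is correct and is essentially the same as the paper's. The paper simply observes that the upper bound follows from submultiplicativity of $(|{\cal B}_n(X)|)_{n\geq1}$ and the lower bound from supermultiplicativity of $(|{\cal B}_n(X)|/K)_{n\geq1}$, appealing implicitly to Fekete's lemma in both directions; your induction $|{\cal B}_n(X)|^{j}\leq K^{j-1}|{\cal B}_{jn}(X)|$ and subsequent limit is precisely the standard proof of the supermultiplicative Fekete lemma carried out by hand for this sequence, so the two arguments coincide.
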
The first inequality holds because the sequence $(\frac{|{\cal B}_n(X)|}{K})_{n\geq1}$ is
supermultiplicative. The second inequality holds because the sequence $(|{\cal B}_n(X)|)_{n\geq1}$ is submultiplicative.

\subsection{Measure-theoretic properties of BSM and balanced shifts}
We now turn our attention to some measure theoretic properties of $BSM$ and balanced shifts. We start with the definitions of a Gibbs-like measure and a Gibbs measure on a shift space. Measures with the Gibbs property have been studied in \cite{PetSch} and more recently in \cite{CliTho}.
\begin{definition}
A probability measure $\mu$ is a Gibbs-like state for a shift space $X$ if there exist $c_1>0$ and $c_2>0$ so that, $\forall \omega$, we have
$$ c_1 \leq \mu ([\omega]) | {\cal B}_r(X) | \leq c_2 $$ where $|\omega| = r$.
\end{definition}
\begin{definition}
A probability measure $\mu$ is a Gibbs state for a shift space $X$ if there exist $c_1$ and $c_2$ strictly positive, so that, $\forall \omega$, we have
$$ c_1 \leq \mu ([\omega])  2^{r h_X} \leq c_2 $$
where $|\omega| = r$ and $h(X)$ is the entropy of $X$.
\end{definition}

\begin{proposition}
Let $X$ be a shift space. The following statements hold.
\begin{enumerate}
  \item $X$ is BSM if and only if there exist positive constants $c_1$ and $c_2$ so that for every $n \geq 1$, we have
\begin{equation}
\label{BSMequivalence}
 c_1 \leq \frac{2^{n h_X}}{| {\cal B}_n(X) |} \leq c_2 .
\end{equation}
Thus, in a BSM shift space the notions of Gibbs state and Gibbs-like state are equivalent.
  \item If $X$ admits a Gibbs state then $X$ is BSM. In particular, any Gibbs state is a Gibbs-like state.
\end{enumerate}
\begin{proof}
 1) Let us assume first that $X$ is BSM. We recall the well known fact that the entropy of $X$ can be expressed as $$h(X)= \inf_{n \geq 1} \frac{1}{n} \log |{\cal B}_n(X)|.$$ This implies that for every $n \geq 1$ we have $h(X) \leq \frac{1}{n} \log |{\cal B}_n(X)|.$ Therefore $2^{n h(X)} \leq |{\cal B}_n(X)|$ for every $n \geq 1$ and we can take $c_2 =1$.

To construct our $c_1$ we observe that Proposition 3.10 implies that for every $n \geq 1$
$$\frac{\log|{\cal B}_n(X)|-\log K}{n}\leq h(X).$$
Therefore $\frac{|{\cal B}_n(X)|}{K} \leq 2^{n h(X)}$ for every $n \geq 1$ and we may take $c_1 = \frac{1}{K}$.

Going in the other direction, assume (\ref{BSMequivalence}) holds. For every $m,n \geq 1$ we have
$$c_1 \leq \frac{2^{m h(X)}}{| {\cal B}_m(X) |} \leq c_2 \textrm{ and } c_1 \leq \frac{2^{n h(X)}}{| {\cal B}_n(X) |} \leq c_2.$$
Multiplying these inequalities we get
\begin{equation}
\label{3131}
c_1^2 \leq \frac{2^{(m+n)h(X)}}{| {\cal B}_m(X) || {\cal B}_n(X) |} \leq c_2^2.
\end{equation}
By (\ref{BSMequivalence}) we have  $2^{(m+n) h(X)} \leq c_2 | {\cal B}_{m+n}(X) |$, so
\begin{equation}
\label{3132}
c_1^2 \leq \frac{c_2 | {\cal B}_{m+n}(X) |}{| {\cal B}_m(X) || {\cal B}_n(X) |}.
\end{equation}
Using (\ref{3132}) we get
$$| {\cal B}_m(X) || {\cal B}_n(X) | \leq \frac{c_2}{c_1^2} | {\cal B}_{m+n}(X) |.$$
This shows that $X$ is BSM. Finally, the fact that the notions of Gibbs state and Gibbs-like state are equivalent in a BSM shift space is a straightforward consequence of their definitions.\\
\\
2) Assume that a Gibbs state $\mu$ exists on $X$. Fix $r \geq 1$ and let $\omega$ be an arbitrary admissible word of length $r$. By the definition of a Gibbs state, there exist $c_1$ and $c_2$ so that
$$ c_1 \leq \mu ([\omega])  2^{r h(X)} \leq c_2 .$$
Summing over all words of length $r$ we obtain
$$c_1 | {\cal B}_r(X) | \leq \sum_{|\omega|=r} \mu ([\omega])  2^{r h(X)} \leq c_2 | {\cal B}_r(X) |. $$
This is equivalent to
$$c_1 | {\cal B}_r(X) | \leq  2^{r h(X)} \leq c_2 | {\cal B}_r(X) |. $$
Which by 1) implies that $X$ is BSM. The fact that any Gibbs state is a Gibbs-like state also follows from 1).
\end{proof}
\end{proposition}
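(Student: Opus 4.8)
The statement to prove is the Proposition characterizing BSM shifts via the two-sided bound \eqref{BSMequivalence} on $2^{nh(X)}/|{\cal B}_n(X)|$, deducing equivalence of Gibbs and Gibbs-like states in a BSM shift, and showing that admitting a Gibbs state forces BSM. The plan is to handle part (1) by going through each implication separately, and part (2) by a simple summation argument over words of a fixed length.

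For part (1), in the forward direction I would use the standard fact that $h(X)=\inf_{n\geq 1}\frac1n\log|{\cal B}_n(X)|$, which is available because $(\log|{\cal B}_n(X)|)_{n\geq1}$ is subadditive (Fekete's lemma). This immediately gives $2^{nh(X)}\leq|{\cal B}_n(X)|$, so $c_2=1$ works. For the lower bound I would invoke the earlier Proposition (the one stating $\frac{\log|{\cal B}_n(X)|-\log K}{n}\leq h(X)$ for BSM shifts), which yields $|{\cal B}_n(X)|\leq K\cdot 2^{nh(X)}$, so $c_1=1/K$ works. For the converse, assume \eqref{BSMequivalence}; write the bound at $m$ and at $n$, multiply to get $c_1^2\leq 2^{(m+n)h(X)}/(|{\cal B}_m(X)||{\cal B}_n(X)|)$, then substitute the upper bound $2^{(m+n)h(X)}\leq c_2|{\cal B}_{m+n}(X)|$ coming from \eqref{BSMequivalence} applied at $m+n$. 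Rearranging gives $|{\cal B}_m(X)||{\cal B}_n(X)|\leq(c_2/c_1^2)|{\cal B}_{m+n}(X)|$, i.e. BSM with $K=c_2/c_1^2$. The equivalence of Gibbs and Gibbs-like states in a BSM shift is then immediate: \eqref{BSMequivalence} says $2^{rh(X)}$ and $|{\cal B}_r(X)|$ differ by bounded multiplicative constants, so the defining inequality for one notion transfers to the other.

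For part (2), suppose $\mu$ is a Gibbs state, so $c_1\leq\mu([\omega])2^{rh(X)}\leq c_2$ for every word $\omega$ with $|\omega|=r$. Summing over all $\omega\in{\cal B}_r(X)$ and using $\sum_{|\omega|=r}\mu([\omega])=1$ gives $c_1|{\cal B}_r(X)|\leq 2^{rh(X)}\leq c_2|{\cal B}_r(X)|$, which is exactly \eqref{BSMequivalence}, so by part (1) $X$ is BSM; and then the Gibbs-like property follows from the constants comparison as above.

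The only genuine ingredient beyond bookkeeping is the inequality $\frac{\log|{\cal B}_n(X)|-\log K}{n}\leq h(X)$ for BSM shifts, which is the earlier Proposition in the excerpt (supermultiplicativity of $(|{\cal B}_n(X)|/K)_{n\geq1}$), so I can cite it directly. Thus I do not expect a substantive obstacle; the main care needed is simply tracking which index ($m$, $n$, or $m+n$) each instance of \eqref{BSMequivalence} is applied at when proving the converse, and making sure the final constant $K=c_2/c_1^2$ is indeed $\geq 1$ (which it is, since $c_1\leq c_2$ and $c_2\geq 1$ from the forward direction, or more directly since $|{\cal B}_m||{\cal B}_n|\leq K|{\cal B}_{m+n}|$ with $m=n=1$ combined with submultiplicativity forces $K\geq1$).
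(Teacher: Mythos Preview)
Your proposal is correct and follows essentially the same route as the paper's proof: the same use of $h(X)=\inf_n \frac{1}{n}\log|{\cal B}_n(X)|$ for $c_2=1$, the same appeal to the earlier Proposition for $c_1=1/K$, the same multiply-and-substitute argument for the converse yielding $K=c_2/c_1^2$, and the same summation over words of length $r$ for part (2). The only addition is your remark on $K\geq 1$, which is harmless (and in any case automatic: if the supermultiplicativity inequality held with a constant below $1$, it would hold a fortiori with $K=1$).
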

For our next result we make use of some results from \cite{Walters} about probability measures on compact metric spaces. Recall that our shifts spaces are compact metric spaces. Given a compact metric space $X,$ by $M(X)$ we denote the set of all probability measures on $X$. We endow $M(X)$ with the $weak^*$ topology. If $X$ is a compact metric space, then $M(X)$ is compact with the $weak^*$ topology.
\begin{theorem}
\label{State balanced thm}
A shift space $X$ admits a Gibbs state if and only if it is balanced.
\begin{proof}
First we show how to construct a Gibbs state on the balanced shift space $X$. For every $r \geq 1$, and for every admissible word $\omega$ of length $r$, we define a point-mass probability measure $\mu_r$ as follows. Choose $\alpha$ an infinite word in $[\omega]$ and define $\mu_r(\{\alpha\})=\frac{1}{| {\cal B}_r(X) |}$. Now, we have
$\mu_r([\omega]) = \frac{1}{ | {\cal B}_r(X) |},$ and $\mu_r([\omega\tau])$ is $0$ if it doesn't contain $\alpha,$ or is the full measure of the cylinder otherwise.

Given $\omega$ and $r$, for every $k \geq 1$, we have
$$\mu_{r+k}([\omega]) = \mu_{r+k}(\bigcup_{\substack{|\alpha|=k\\ \omega\alpha\in \mathcal{B}_{r+k}(X)}}[\omega\alpha]) =\sum_{\substack{|\alpha|=k\\ \omega\alpha\in \mathcal{B}_{r+k}(X)}}\mu ([\omega\alpha])=|{\cal B}_{\omega,k}(X) |\frac{1}{|{\cal B}_{r+k}(X) |}.$$\\
Since $X$ is balanced, there exists $B > 0$, independent of $\omega$ and $k$, so that $B \leq \frac{|{\cal B}_{\omega,k}(X)|}{| {\cal B} _k(X)|}$. Thus
$$B | {\cal B} _k(X)| \leq |{\cal B}_{\omega,k}(X)| \leq | {\cal B} _k(X)|.$$ Hence
$$\frac{B | {\cal B} _k(X)|} {| {\cal B} _{r+k}(X)|} \leq \mu_{r+k}([\omega]) \leq \frac{ | {\cal B} _k(X)|} {| {\cal B} _{r+k}(X)|}.$$
By Proposition \ref{BalBSM prop} every balanced shift is BSM. This means that there exists $K \geq 1$ so that
$$|{\cal B}_{r+k}(X)| \leq |{\cal B}_r(X)|\cdot|{\cal B}_k(X)| \leq K |{\cal B}_{r+k}(X)|.$$
Thus
$$\frac{1}{|{\cal B}_r(X)|} \leq \frac{|{\cal B}_k(X)|}{|{\cal B}_{r+k}(X)|} \leq \frac{K}{|{\cal B}_r(X)|}.$$
Choosing $c_1=B$ and $c_2=K$, we conclude that there exists $c_1$ and $c_2$ so that
\begin{equation}
\label{weak^*}
\frac{c_1} {| {\cal B}_r(X) |} \leq \mu_{r+k}([\omega]) \leq \frac{c_2} {| {\cal B}_r(X) |}.
\end{equation}
At this point, let us choose a convergent subsequence for our sequence of measures, call it $\{\mu_{p_l}\}_{l \geq 1}$. Let $\mu$ be the weak$^*$ limit of this subsequence.

Since our alphabet is finite, the cylinder $[\omega]$ is both open and closed for any finite word $\omega$.
Let $\omega$ now be an arbitrary word and let $|\omega|=r$. For any $l$ so that $p_l \geq r$, we have
\begin{equation}
\frac{c_1} {| {\cal B}_r(X) |} \leq \mu_{p_l}([\omega]) \leq \frac{c_2} {| {\cal B}_r(X) |}.
\end{equation}
Letting $l$ go to infinity we obtain
\begin{equation}
\frac{c_1} {| {\cal B}_r(X) |} \leq \mu([\omega]) \leq \frac{c_2} {| {\cal B}_r(X) |}.
\end{equation}
This is a consequence of $[\omega]$ being open and closed. This shows that $\mu$ is a Gibbs-like state and thus, using Proposition 3.13 2), a Gibbs state (since $X$ is BSM).

Going in the other direction, suppose $X$ admits a Gibbs state $\mu$. Let $\omega$ an arbitrary word of length $r,$ and let $k\geq 1$ be an integer. By Proposition 3.13 we know that $\mu$ is also a Gibbs like state, therefore $$\frac{c_1}{|{\cal B}_r(X)|} \leq \mu ([\omega]) = \mu (\bigcup_{\substack{|\alpha|=k\\ \omega\alpha\in \mathcal{B}_{r+k}(X)}}[\omega\alpha]) =\sum_{\substack{|\alpha|=k\\ \omega\alpha\in \mathcal{B}_{r+k}(X)}}\mu ([\omega\alpha])\leq c_2 |{\cal B}_{\omega,k}(X) |\frac{1}{|{\cal B}_{r+k}(X) |}.$$\\
Hence
\begin{equation}
\label{3133}
c_1 \leq c_2 |{\cal B}_{\omega,k}(X) ||{\cal B}_r(X)| \frac{1}{|{\cal B}_{r+k}(X) |}.
\end{equation}
Using Proposition 3.13 2), $X$ is BSM, and so, there exists $K \geq 1$ so that
$$|{\cal B}_r(X)|\cdot|{\cal B}_k(X)| \leq K |{\cal B}_{r+k}(X)|.$$
Thus
\begin{equation}
\label{3134}
\frac{|{\cal B}_r(X)|}{|{\cal B}_{r+k}(X)|} \leq \frac{K}{|{\cal B}_k(X)|}.
\end{equation}
Using (\ref{3133}) and (\ref{3134}) we get:
$$c_1 \leq c_2 K \frac{|{\cal B}_{\omega,k}(X) |}{|{\cal B}_k(X) |}.$$
Letting $d_1 = \frac{c_1}{c_2 K}$, we have
$$d_1 \leq  \frac{|{\cal B}_{\omega,k}(X) |}{|{\cal B}_k(X) |}.$$
This shows that $X$ is balanced and finishes the proof.
\end{proof}
\end{theorem}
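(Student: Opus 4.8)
The plan is to prove the two implications separately, with both resting on Proposition~\ref{BalBSM prop} (every balanced shift is BSM) and on Proposition 3.13 (the characterisations of BSM shifts, in particular that on a BSM shift the notions of Gibbs state and Gibbs-like state coincide). The implication ``admits a Gibbs state $\Rightarrow$ balanced'' is a short counting argument; the implication ``balanced $\Rightarrow$ admits a Gibbs state'' requires actually producing the measure, and this is where the substance lies, via an approximation-and-weak$^*$-limit procedure.

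For the construction, assume $X$ is balanced. For each $r\geq 1$ I would define a purely atomic probability measure $\mu_r$ by choosing, for every admissible word $\omega$ of length $r$, one bi-infinite sequence $\alpha_\omega\in[\omega]$ and setting $\mu_r(\{\alpha_\omega\})=1/|{\cal B}_r(X)|$ (distinct length-$r$ words cannot be the common position-$0$ block of one sequence, so the atoms are distinct and $\mu_r([\omega])=1/|{\cal B}_r(X)|$). The key computation is that, for any admissible $\omega$ with $|\omega|=r$ and any $k\geq 1$, partitioning $[\omega]$ into the cylinders $[\omega\alpha]$ over length-$k$ words $\alpha$ with $\omega\alpha$ admissible yields
$$\mu_{r+k}([\omega])=\frac{|{\cal B}_{\omega,k}(X)|}{|{\cal B}_{r+k}(X)|}.$$
Balancedness gives $B|{\cal B}_k(X)|\leq|{\cal B}_{\omega,k}(X)|\leq|{\cal B}_k(X)|$, while submultiplicativity of $(|{\cal B}_n(X)|)$ and BSM (available by Proposition~\ref{BalBSM prop}) give $1/|{\cal B}_r(X)|\leq|{\cal B}_k(X)|/|{\cal B}_{r+k}(X)|\leq K/|{\cal B}_r(X)|$; writing $\mu_{r+k}([\omega])$ as the product of these two ratios bounds it between $B/|{\cal B}_r(X)|$ and $K/|{\cal B}_r(X)|$, with constants uniform in $r$, $k$ and $\omega$. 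Now pass to a weak$^*$-convergent subsequence $\mu_{p_l}\to\mu$, which exists since $M(X)$ is weak$^*$-compact. Because the alphabet is finite, every cylinder $[\omega]$ is clopen, so $\mu_{p_l}([\omega])\to\mu([\omega])$ and the uniform bounds survive the limit: $\mu$ is a Gibbs-like state. Since $X$ is BSM, Proposition 3.13 upgrades this to a genuine Gibbs state.

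For the converse, suppose $\mu$ is a Gibbs state. By Proposition 3.13 part 2, $X$ is BSM and $\mu$ is also a Gibbs-like state, so $c_1/|{\cal B}_r(X)|\leq\mu([\omega])\leq c_2/|{\cal B}_r(X)|$ for every admissible word $\omega$ of length $r$. Fix such $\omega$ and $k\geq 1$; partitioning $[\omega]$ into its $|{\cal B}_{\omega,k}(X)|$ subcylinders $[\omega\alpha]$ as above and bounding each above by $c_2/|{\cal B}_{r+k}(X)|$ gives
$$\frac{c_1}{|{\cal B}_r(X)|}\leq\mu([\omega])\leq\frac{c_2\,|{\cal B}_{\omega,k}(X)|}{|{\cal B}_{r+k}(X)|}.$$
Rearranging and then using BSM in the form $|{\cal B}_r(X)|\cdot|{\cal B}_k(X)|\leq K|{\cal B}_{r+k}(X)|$ to bound $|{\cal B}_{r+k}(X)|/|{\cal B}_r(X)|$ below by $|{\cal B}_k(X)|/K$ yields $|{\cal B}_{\omega,k}(X)|/|{\cal B}_k(X)|\geq c_1/(c_2K)$, which is exactly balancedness with $B=c_1/(c_2K)$.

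I expect the reverse direction to be the main obstacle, for two reasons. First, the approximating measures must be set up so that the estimates on $\mu_{r+k}([\omega])$ are uniform in all of $r$, $k$ and $\omega$ at once — this uniformity is precisely what lets the limiting measure satisfy the Gibbs-like inequality at every scale. Second, the limit has to be read off along cylinders, and the \emph{exact} passage $\mu_{p_l}([\omega])\to\mu([\omega])$, as opposed to the one-sided portmanteau inequalities one gets in general, relies on cylinders being simultaneously open and closed, which is special to shifts over a finite alphabet. A minor but essential bookkeeping point is that both directions invoke balanced $\Rightarrow$ BSM and the BSM characterisation, so Propositions~\ref{BalBSM prop} and 3.13 must be applied before the counting estimates are run.
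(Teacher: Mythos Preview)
Your proposal is correct and follows essentially the same approach as the paper: the same atomic approximating measures $\mu_r$, the same key computation $\mu_{r+k}([\omega])=|{\cal B}_{\omega,k}(X)|/|{\cal B}_{r+k}(X)|$, the same weak$^*$ limit argument exploiting that cylinders are clopen, and the same partition-and-count argument for the converse, all resting on Propositions~\ref{BalBSM prop} and 3.13 exactly as you outlined.
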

The following corollaries are an immediate consequence of Theorem \ref{State balanced thm}.
\begin{corollary}
Suppose that the $\beta$-expansion of $1$ contains arbitrarily long sequences of zeros. Then the $\beta$-shift does not admit a Gibbs state.
\end{corollary}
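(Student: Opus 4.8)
The plan is to reduce the statement to Theorem~\ref{State balanced thm}, which says that a shift space admits a Gibbs state if and only if it is balanced. So it suffices to show that, under the stated hypothesis, the $\beta$-shift $X$ is not balanced; this is exactly what is claimed in the $\beta$-shift example above, and I would carry out the short argument in full. Write the $\beta$-expansion of $1$ as $(d_i)_{i\ge1}$. Since it contains arbitrarily long blocks of zeros it is not eventually equal to $0$, hence it coincides with the quasi-greedy expansion of $1$, and in particular the sequence $(d_i)$ is itself a point of $X$. For each $\ell\ge1$ the hypothesis lets us fix an integer $m_\ell\ge0$ with $d_{m_\ell+1}=\cdots=d_{m_\ell+\ell}=0$.

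The key input is Parry's lexicographic description of the language of a $\beta$-shift (see \cite{Parry}): a word $w=w_1\cdots w_n$ is admissible if and only if each of its suffixes satisfies $(w_k,\ldots,w_n)\preceq(d_1,\ldots,d_{n-k+1})$. I would extract two facts from this. First, since $(d_i)\in X$, every prefix $d_1\cdots d_j$ is admissible, so in particular $\omega_\ell:=d_1\cdots d_{m_\ell}\in\mathcal{B}(X)$. Second, suppose $\alpha=\alpha_1\cdots\alpha_\ell$ satisfies $\omega_\ell\alpha\in\mathcal{B}(X)$; applying the criterion to the word $\omega_\ell\alpha$ itself gives $\omega_\ell\alpha\preceq(d_1,\ldots,d_{m_\ell},0,\ldots,0)$, and since $\omega_\ell\alpha$ already agrees with this string in its first $m_\ell$ entries, the comparison forces $(\alpha_1,\ldots,\alpha_\ell)\preceq(0,\ldots,0)$, i.e.\ $\alpha=0^\ell$. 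As $\omega_\ell 0^\ell=d_1\cdots d_{m_\ell+\ell}$ is a prefix of $(d_i)$ and hence admissible, this shows $\mathcal{B}_{\omega_\ell,\ell}(X)=\{0^\ell\}$, so $|\mathcal{B}_{\omega_\ell,\ell}(X)|=1$ for every $\ell$.

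To finish, recall that $h(X)=\log\beta>0$ and that $h(X)=\inf_{n\ge1}\frac{1}{n}\log|\mathcal{B}_n(X)|$, so $|\mathcal{B}_\ell(X)|\ge 2^{\ell h(X)}\to\infty$ as $\ell\to\infty$. If $X$ were balanced with constant $B>0$ we would obtain $B\le |\mathcal{B}_{\omega_\ell,\ell}(X)|/|\mathcal{B}_\ell(X)|=1/|\mathcal{B}_\ell(X)|$ for all $\ell$, forcing $|\mathcal{B}_\ell(X)|\le B^{-1}$, a contradiction. Hence $X$ is not balanced, and Theorem~\ref{State balanced thm} gives that $X$ admits no Gibbs state.

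There is no serious obstacle once Theorem~\ref{State balanced thm} is available: the whole argument rests on the one-line observation that a block which equals a prefix of the expansion of $1$ ending just before a long run of zeros can only be continued by zeros for the length of that run. The single point requiring care is that the hypothesis be read as asserting the expansion of $1$ is infinite (not eventually zero); otherwise $X$ would be an irreducible shift of finite type, hence almost specified, hence balanced by Theorem~\ref{theorem 1}, and the statement would fail.
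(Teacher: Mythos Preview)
Your proposal is correct and follows exactly the route the paper intends: the corollary is stated as an immediate consequence of Theorem~\ref{State balanced thm} together with the $\beta$-shift example, which asserts (by an argument ``similar'' to the $S$-gap case) that such a $\beta$-shift is not balanced. You have simply written out that similar argument in detail, using Parry's lexicographic criterion to play the role that the explicit description of admissible words played in the $S$-gap example; your final caveat about reading the hypothesis so that the expansion is not eventually zero is also appropriate.
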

\begin{corollary}
If $S$ is a subset of the positive integers so that $S$ has unbounded gaps. Then the corresponding $S$-gap shift does not admit a Gibbs state.
\end{corollary}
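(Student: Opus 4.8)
The plan is to obtain this as an immediate corollary of Theorem~\ref{State balanced thm}, which states that a shift space admits a Gibbs state if and only if it is balanced. Thus it is enough to show that $X(S)$ is not balanced whenever $S$ has unbounded gaps, and the Corollary follows by contraposition. In fact this non-balancedness is exactly the content of the Example on $S$-gap shifts in Section~\ref{Shifts}, so the cleanest argument is simply: $S$ has unbounded gaps $\Rightarrow$ $X(S)$ is not balanced (that Example) $\Rightarrow$ $X(S)$ admits no Gibbs state (Theorem~\ref{State balanced thm}).

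For completeness I would spell out why unbounded gaps obstruct balancedness. Write $S=\{n_0<n_1<n_2<\cdots\}$; since the gaps $n_{i+1}-n_i$ are unbounded we may pick indices $i_k$ with $n_{i_k+1}-n_{i_k}\to\infty$. For each large $k$ put $\omega=10^{\,n_{i_k}+1}$ and $r=n_{i_k+1}-n_{i_k}-1\geq 1$. The key observation is that the only admissible word of length $r$ that can follow $\omega$ is $0^r$: if a continuation contained a $1$ among its first $r$ letters, the run of zeros started by the initial $1$ of $\omega$ would have some length $n_{i_k}+j$ with $1\le j\le r$, hence lying strictly between the consecutive elements $n_{i_k}$ and $n_{i_k+1}$ of $S$, which is impossible. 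Therefore $|{\cal B}_{\omega,r}(X)|=1$, whereas $|{\cal B}_r(X)|\to\infty$ as $k\to\infty$ because $S$ is infinite and so $X(S)$ has positive entropy. Consequently $|{\cal B}_{\omega,r}(X)|/|{\cal B}_r(X)|\to 0$, so no uniform lower bound $B>0$ as in the definition of a balanced shift can hold, and $X(S)$ is not balanced.

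Combining these two points finishes the proof. There is no genuine obstacle here: all the substantive work has already been carried out in Theorem~\ref{State balanced thm} and in the Example, and the only mildly delicate point is the counting identity $|{\cal B}_{\omega,r}(X)|=1$, which works precisely because the zero-run in $\omega$ is chosen to overshoot the element $n_{i_k}\in S$ by exactly one, so that the next admissible run-length $n_{i_k+1}$ is far away.
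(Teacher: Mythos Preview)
Your proof is correct and follows exactly the paper's approach: the corollary is stated as an immediate consequence of Theorem~\ref{State balanced thm} together with the earlier Example showing that an $S$-gap shift with unbounded gaps is not balanced. Your recount of that example (with $r=n_{i_k+1}-n_{i_k}-1$) is in fact slightly more precise than the paper's sketch, but the idea is identical.
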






\section{$S$-gap shifts and expansions in non-integer bases}

In this section we further examine the connection between S-gap shifts and expansions in non-integer bases. To begin with, let us recall the content of Proposition \ref{entropy prop}. Given an $S$-gap shift defined by a set $S=\{n_{0},n_{1},\ldots,n_{i},\ldots\}\subseteq \mathbb{N},$ then the entropy of the shift is $\log \lambda$ where $\lambda$ is the unique solution to the equation

\begin{equation}
\label{entropy equation}
\sum_{n_{i}\in S}\frac{1}{\lambda^{n_{i}+1}}=1.
\end{equation}
Going in the opposite direction, suppose we have a sequence $(a_{j})_{j=1}^{\infty}\in\{0,1\}^{\mathbb{N}}$  and $\lambda\in(1,2)$ such that
\begin{equation}
\label{1 expansion}
\sum_{j=1}^{\infty}\frac{a_{j}}{\lambda^{j}}=1.
\end{equation} Then if we let $S=\{j-1:a_{j}=1\},$ the entropy of the associated $S$-gap shift is $\log \lambda.$ Combining the above statements the following proposition holds.

\begin{proposition}
\label{Bijection proposition}
The map sending $(a_{j})_{j=1}^{\infty}$ to the $S$-gap shift defined by $S:=\{j-1:a_{j}=1\}$ is a bijection between sequences satisfying (\ref{1 expansion}) and $S$-gap shifts with entropy $\log \lambda.$
\end{proposition}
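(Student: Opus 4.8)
The plan is to verify that the map described is well-defined, injective, and surjective onto the set of $S$-gap shifts with entropy $\log\lambda$. The well-definedness is essentially the content of the discussion immediately preceding the proposition: given a sequence $(a_j)_{j=1}^{\infty}\in\{0,1\}^{\mathbb{N}}$ satisfying \eqref{1 expansion}, the set $S:=\{j-1:a_j=1\}$ is a non-empty subset of $\mathbb{N}$ (non-empty because the series sums to $1>0$, so at least one $a_j=1$; and indeed $a_1=1$ is forced since $\lambda<2$ would otherwise make the tail too small, though we only need non-emptiness), and by Proposition \ref{entropy prop} together with equation \eqref{entropy equation} the associated $S$-gap shift $X(S)$ has entropy exactly $\log\lambda$. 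So the map does land in the claimed codomain.

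First I would prove injectivity. Suppose $(a_j)_{j=1}^{\infty}$ and $(b_j)_{j=1}^{\infty}$ both satisfy \eqref{1 expansion} for the same $\lambda$ and produce the same $S$-gap shift, i.e.\ $\{j-1:a_j=1\}=\{j-1:b_j=1\}=S$. But knowing $S$ recovers the sequence: $a_j=1$ if and only if $j-1\in S$, and likewise for $b$. Hence $a_j=b_j$ for all $j$, so the map is injective. (One should note here that distinct $S$-gap shifts correspond to distinct sets $S$ — two $S$-gap shifts $X(S)$ and $X(S')$ are equal as subsets of $\{0,1\}^{\mathbb{Z}}$ if and only if $S=S'$, since $S$ is the set of lengths of maximal runs of zeros appearing in the shift; this is immediate from the definition.)

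Next I would prove surjectivity, which is the part requiring the most care. Let $X(S)$ be an $S$-gap shift with entropy $\log\lambda$, where $S=\{n_0,n_1,\ldots\}\subseteq\mathbb{N}$. By Proposition \ref{entropy prop}, $\lambda$ is the unique positive solution to $\sum_{i\geq 0}\lambda^{-(n_i+1)}=1$; in particular $\lambda\in(1,2)$ (it is at most $2$ since the alphabet has two letters, and it is strictly less than $2$ and strictly greater than $1$ for any non-empty $S$ by a direct estimate on the series, or by $h(X(S))<\log 2$). Now define $(a_j)_{j=1}^{\infty}\in\{0,1\}^{\mathbb{N}}$ by setting $a_j=1$ if $j-1\in S$ and $a_j=0$ otherwise. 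Then $\sum_{j=1}^{\infty}a_j\lambda^{-j}=\sum_{n_i\in S}\lambda^{-(n_i+1)}=1$, so $(a_j)$ satisfies \eqref{1 expansion}, and by construction the map sends $(a_j)$ to $X(S)$. Hence the map is surjective.

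The main obstacle — though it is more a matter of care than of difficulty — is pinning down exactly which ``$\lambda$'' indexes the codomain and ensuring the ranges of the quantifiers match up: the proposition implicitly fixes $\lambda\in(1,2)$ on both sides, and one must check that every $S$-gap shift of entropy $\log\lambda$ genuinely arises from a sequence satisfying \eqref{1 expansion} \emph{for that same $\lambda$}, which is exactly what Proposition \ref{entropy prop} guarantees via the uniqueness of the positive root. With that observation in hand the three verifications above are routine, and the proposition follows. \qed
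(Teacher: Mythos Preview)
Your proof is correct and follows the same approach as the paper, which in fact gives no formal proof at all but simply asserts that the proposition follows from the two-way correspondence sketched in the preceding paragraph; you have merely made explicit the well-definedness, injectivity, and surjectivity that the paper leaves implicit. (One minor parenthetical slip: $a_1=1$ is not forced for every $\lambda\in(1,2)$ --- for example $a_1=0$ is possible whenever $\lambda\leq\frac{1+\sqrt{5}}{2}$ --- but as you yourself note, only non-emptiness of $S$ is required, and that is clear.)
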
As we will see, Proposition \ref{Bijection proposition} will be fundamental in proving all of our results connecting $S$-gap shifts and expansions in non-integer bases. What remains of this section will be split into two parts. We begin by proving several results for $S$-gap shifts that depend on an Proposition \ref{Bijection proposition}. In the second part we recall several fundamental results from expansions in non-integer bases. We then show that as a consequence of Proposition \ref{Bijection proposition} they yield equally as appealing analogues in the setting of $S$-gap shifts.

\begin{proposition}
\label{prop2}
For any $q\in (0,1)$ there exists an $S$-gap shift which has the specification property and entropy $q$.
\end{proposition}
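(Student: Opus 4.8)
The plan is to invoke the dictionary of Proposition~\ref{Bijection proposition}. Set $\lambda := 2^{q}$, so that $\lambda\in(1,2)$ and $\log\lambda = q$. By Proposition~\ref{Bijection proposition}, producing an $S$-gap shift of entropy $q$ amounts to producing a sequence $(a_{j})_{j=1}^{\infty}\in\{0,1\}^{\mathbb{N}}$ with $\sum_{j=1}^{\infty}a_{j}/\lambda^{j}=1$, and then taking $S=\{j-1:a_{j}=1\}$. By Proposition~\ref{prop1}(4), $X(S)$ has the specification property precisely when $\sup_{i}|n_{i+1}-n_{i}|<\infty$ and $\gcd\{n+1:n\in S\}=1$. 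The first condition says exactly that $(a_{j})$ has a bounded number of consecutive zeros (the gap between consecutive $1$'s in $(a_{j})$ equals one plus the length of the zero-run between them), and since $\{n+1:n\in S\}=\{j:a_{j}=1\}$, the second condition says $\gcd\{j:a_{j}=1\}=1$. So it suffices to build a $\lambda$-expansion of $1$ with bounded zero-runs whose set of $1$-positions has gcd $1$.

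The construction is: put $a_{1}=1$ and let $(a_{j+1})_{j\geq 1}$ be the lazy $\lambda$-expansion of $\lambda-1$. This is a genuine $\lambda$-expansion of $1$, since $\tfrac{1}{\lambda}+\tfrac{1}{\lambda}\sum_{j\geq 1}a_{j+1}/\lambda^{j}=\tfrac{1}{\lambda}+\tfrac{1}{\lambda}(\lambda-1)=1$. Because $\lambda\in(1,2)$ we have $0<\lambda-1\leq\tfrac{1}{\lambda-1}$, so the lazy expansion of $\lambda-1$ is defined and, by the corollary following Lemma~\ref{consecutive zeros remark}, contains a bounded number of consecutive zeros; prepending the single digit $a_{1}=1$ does not create a longer zero-run, so $(a_{j})$ has bounded zero-runs. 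Finally $a_{1}=1$ forces $1\in\{j:a_{j}=1\}$, hence $\gcd\{j:a_{j}=1\}=1$. Taking $S=\{j-1:a_{j}=1\}$, Proposition~\ref{prop1}(4) gives that $X(S)$ has the specification property, and Proposition~\ref{Bijection proposition} (equivalently Proposition~\ref{entropy prop}) gives $h(X(S))=\log\lambda=q$.

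I expect the only genuinely delicate point to be the gcd condition: the bounded-zero-run requirement is handed to us for free by the lazy-expansion machinery already set up (Lemma~\ref{consecutive zeros remark} and its corollary), but ensuring $\gcd\{n+1:n\in S\}=1$ requires an intervention. The intervention used above is to force a $1$ in the first coordinate; this is legitimate for every $\lambda\in(1,2)$ because either the greedy and lazy maps already force digit $1$ at the first step (this happens when $1>\tfrac{1}{\lambda(\lambda-1)}$, i.e. $\lambda>\tfrac{1+\sqrt{5}}{2}$), or $1$ lies in the switch region $\mathcal{S}_{\lambda}=[\tfrac{1}{\lambda},\tfrac{1}{\lambda(\lambda-1)}]$ (when $\lambda\leq\tfrac{1+\sqrt{5}}{2}$), so that choosing $a_{1}=1$ and then $T_{1}(1)=\lambda-1\in I_{\lambda}$ is an admissible start of a $\lambda$-expansion. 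In either case the tail can be completed by the lazy algorithm, which keeps the zero-runs bounded.
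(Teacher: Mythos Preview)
Your argument is correct, and it takes a somewhat different route from the paper's. Both proofs translate the problem via Proposition~\ref{Bijection proposition} and Proposition~\ref{prop1}(4) into building a $\lambda$-expansion of $1$ with bounded zero-runs and with $\gcd\{j:a_j=1\}=1$, and both use the lazy map (via Lemma~\ref{consecutive zeros remark} and its corollary) to secure the bounded-zero-run condition. The difference is in how the gcd condition is enforced. The paper forces two \emph{consecutive} $1$'s by starting with $T_1^2(1)$; since $T_1^2(1)=\lambda^2-\lambda-1$ is only nonnegative for $\lambda\geq\tfrac{1+\sqrt{5}}{2}$, this approach only covers $q\in[\log\tfrac{1+\sqrt{5}}{2},1)$ directly, and the paper handles the remaining range by appealing to the stronger Theorem~\ref{continuum prop}. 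Your device of simply setting $a_1=1$ (legitimate because $T_1(1)=\lambda-1\in(0,1)\subset I_\lambda$ for every $\lambda\in(1,2)$) gives $1\in\{j:a_j=1\}$ and hence $\gcd=1$ immediately, so your proof works uniformly across the whole interval $(0,1)$ without a case split. This is a genuine simplification; what the paper's route buys in exchange is that its method of engineering consecutive $1$'s feeds naturally into the construction of a \emph{continuum} of such expansions in Theorem~\ref{continuum prop}, whereas your trick produces only the single expansion needed here.
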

\begin{proof}

The case where $q\in (0,\log(\frac{1+\sqrt{5}}{2}))$ will follow from a stronger result proved in the following proposition. As such we just consider $q\in [\log(\frac{1+\sqrt{5}}{2}),1).$ By Proposition \ref{prop1} and Proposition \ref{Bijection proposition}, it suffices to show that for any $\lambda\in[\frac{1+\sqrt{5}}{2},2)$ there exists a $\lambda$-expansion of $1$ which contains two consecutive ones and the number of consecutive zeros is bounded.

For $\lambda=\frac{1+\sqrt{5}}{2}$ this is a simple consequence of the fact that $\sum_{j=2}^{\infty}\lambda^{-j}=1$. Now let $\lambda\in (\frac{1+\sqrt{5}}{2},2).$ We consider the point $T_{1}^{2}(1)$. It is easy to show that $T_{1}^{2}(1)\in (0,\frac{1}{\lambda-1})$. We then apply the lazy map to $T_{1}^{2}(1).$ Under this map $T_{1}^{2}(1)$ is eventually mapped into $(\frac{2-\lambda}{\lambda-1},\frac{1}{\lambda-1}]$ and stays there. By Lemma \ref{consecutive zeros remark} there exists a $\lambda$-expansion of $1$ with the desired properties.
\end{proof}For $q\in (0,\log(\frac{1+\sqrt{5}}{2}))$ we can prove the following stronger statement.

\begin{theorem}
\label{continuum prop}
For any $q\in (0,\log(\frac{1+\sqrt{5}}{2}))$ there exists a continuum of $S$-gap shifts which have the specification property and entropy $q.$
\end{theorem}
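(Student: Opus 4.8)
The goal is to produce a continuum (i.e. cardinality $\mathfrak{c}$) of $S$-gap shifts, all with the specification property and all with entropy exactly $q$, for an arbitrary $q\in(0,\log(\frac{1+\sqrt5}{2}))$. By Proposition \ref{Bijection proposition}, $S$-gap shifts of entropy $q=\log\lambda$ correspond bijectively to $\lambda$-expansions of $1$, so it suffices to produce, for each $\lambda\in(1,\frac{1+\sqrt5}{2})$, a continuum of distinct sequences $(a_j)_{j=1}^\infty\in\{0,1\}^{\mathbb N}$ with $\sum_j a_j\lambda^{-j}=1$, each of which contains two consecutive $1$'s (so that $\gcd\{n+1:n\in S\}=1$) and has a bounded number of consecutive $0$'s (so that $\sup_i|n_{i+1}-n_i|<\infty$); then Proposition \ref{prop1}(4) gives specification. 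The two-consecutive-ones condition is easy to build in from the start (e.g. force $a_1=a_2=1$, or rather force two adjacent $1$'s somewhere), so the real content is: construct continuum-many bounded-gap $\lambda$-expansions of $1$, and ensure they are genuinely distinct.

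**Carrying it out.** The key structural fact I would use is that for $\lambda<\frac{1+\sqrt5}{2}$ the ``switch region'' $\mathcal S_\lambda=[\frac1\lambda,\frac1{\lambda(\lambda-1)}]$ is comfortably contained in the interior of $I_\lambda$, and in fact is bounded away from $0$; more importantly, when $\lambda^2<\lambda+1$ one checks that the greedy and lazy images of points stay in a compact subinterval of $(0,\frac1{\lambda-1}]$ bounded away from $0$, so every expansion produced by iterating any mixture of the greedy choice $G$ and lazy choice $L$ keeps all orbit points above some fixed $\delta>0$. By Lemma \ref{consecutive zeros remark}, any such expansion automatically has a bounded number of consecutive zeros (the bound depending only on $\delta$ and $\lambda$, hence uniform). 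Concretely: start at $x_0=1$, apply $T_1$ twice (legitimate since $T_1^2(1)\in(0,\frac1{\lambda-1})$ as in the proof of Proposition \ref{prop2}, and this plants the required adjacent $1$'s), and thereafter, at each time the current orbit point lies in $\mathcal S_\lambda$, make a free binary choice ($T_0$ or $T_1$), while outside $\mathcal S_\lambda$ the digit is forced. Because $x$ is revisited to $\mathcal S_\lambda$ infinitely often (this needs a short argument — see below), each infinite binary string of choices yields a distinct element of $\Omega_\lambda(1)$, hence by Lemma \ref{Bijection lemma} a distinct $\lambda$-expansion of $1$; all of them stay above a uniform $\delta$, hence all have uniformly bounded zero-runs. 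This produces $2^{\aleph_0}=\mathfrak c$ distinct $S$-gap shifts, each with specification, each of entropy $q=\log\lambda$.

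**The main obstacle.** The delicate point is guaranteeing that the orbit returns to the switch region $\mathcal S_\lambda$ infinitely often no matter which choices are made — this is what forces the choices to be genuinely free infinitely often and hence gives full cardinality $\mathfrak c$ rather than merely countably or finitely many expansions. For $\lambda<\frac{1+\sqrt5}{2}$ one expects $\mathcal S_\lambda$ to be large enough (relative to $I_\lambda$) that it cannot be permanently avoided; the clean way to see it is that a point with a \emph{unique} expansion must avoid $\mathcal S_\lambda$ under all iterates of $G$, and for $\lambda$ in this range the set of such univoque points is known to be small (indeed the univoque set is trivial for $\lambda\le\frac{1+\sqrt5}{2}$, a classical fact of Erd\H{o}s--Joó--Komornik / Glendinning--Sidorov), so in particular $1$ is not univoque and, after routine bookkeeping, returns to $\mathcal S_\lambda$ infinitely often along every branch. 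An alternative, more self-contained route avoids characterising all return behaviour: fix a large integer $p$ with $\lambda^p$ big, and build the expansion in blocks of length $p$ where within each block one inserts, at a controlled location, a digit that is genuinely free (because a short computation shows the relevant orbit point can be steered into $\mathcal S_\lambda$ within $p$ steps by an appropriate prefix of forced-then-free digits) followed by $p-1$ forced digits that both reset the orbit into a fixed compact interval away from $0$ and keep zero-runs short; iterating over all block choices again gives $\mathfrak c$ distinct bounded-gap expansions of $1$. Either way, the quantitative estimates ($T_1^2(1)\in(0,\frac1{\lambda-1})$, the location of $\mathcal S_\lambda$, the uniform lower bound $\delta$, and the return-time bound) are elementary interval arithmetic in $\lambda$; the conceptual crux is the infinitely-often-free-choice mechanism, and I would organise the write-up around establishing that first and then invoking Lemma \ref{consecutive zeros remark} and Propositions \ref{prop1} and \ref{Bijection proposition} to conclude.
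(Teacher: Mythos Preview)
Your reduction is exactly right and matches the paper: via Proposition~\ref{Bijection proposition} and Proposition~\ref{prop1}(4), the problem becomes producing, for each $\lambda\in(1,\tfrac{1+\sqrt5}{2})$, a continuum of $\lambda$-expansions of $1$ containing two consecutive $1$'s and with bounded runs of $0$'s. However, two concrete steps in your execution fail.

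\textbf{First (minor).} You propose to begin by applying $T_1$ twice to $1$, citing Proposition~\ref{prop2}. But Proposition~\ref{prop2} treats $\lambda\ge\tfrac{1+\sqrt5}{2}$; for $\lambda<\tfrac{1+\sqrt5}{2}$ one has $T_1^2(1)=\lambda^2-\lambda-1<0\notin I_\lambda$, so this move is illegal. The paper repairs this by first iterating $T_0$ until the orbit of $1$ exceeds $\tfrac{\lambda}{\lambda^2-1}$, and only then applying $T_1^2$; one checks $T_1^2\bigl(\tfrac{\lambda}{\lambda^2-1}\bigr)>0$, so this is legitimate.

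\textbf{Second (the real gap).} Your central claim, that ``any mixture of the greedy choice $G$ and lazy choice $L$ keeps all orbit points above some fixed $\delta>0$'', is false. The greedy map sends the left edge of $\mathcal S_\lambda$ to $0$: $T_1(\tfrac1\lambda)=0$. More to the point, always choosing $T_1$ in $\mathcal S_\lambda$ is exactly the greedy algorithm, and for Lebesgue-a.e.\ $\lambda$ (by the very result of Schmeling you invoke in Theorem~\ref{almost everywhere theorem}) the greedy orbit of $1$ is equidistributed on $[0,1)$, hence accumulates at $0$, hence the greedy expansion of $1$ has unbounded zero-runs. So among the expansions your scheme produces, many do \emph{not} satisfy the bounded-gap condition, and no uniform $\delta$ exists. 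You have also inverted the difficulty: returning to $\mathcal S_\lambda$ infinitely often is in fact automatic in this range (for $\lambda<\tfrac{1+\sqrt5}{2}$ neither $T_0$ nor $T_1$ can overshoot $\mathcal S_\lambda$ from the outside, so forced iteration always lands back in it), whereas staying bounded away from $0$ is the genuine issue.

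The paper resolves this with an explicit trapping region. The endpoints $\tfrac{1}{\lambda^2-1}$ and $\tfrac{\lambda}{\lambda^2-1}$ form a period-two orbit under $\{T_0,T_1\}$, and for $\lambda<\tfrac{1+\sqrt5}{2}$ one has $\bigl[\tfrac{1}{\lambda^2-1},\tfrac{\lambda}{\lambda^2-1}\bigr]\subsetneq\mathcal S_\lambda$. Once the orbit is steered into this subinterval, \emph{either} choice $T_0$ or $T_1$ lands in $\bigl[T_1(\tfrac{1}{\lambda^2-1}),\,T_0(\tfrac{\lambda}{\lambda^2-1})\bigr]$, and a bounded number of forced steps returns it to the subinterval. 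Iterating, every branch of the resulting tree of expansions stays inside $\bigl[T_1(\tfrac{1}{\lambda^2-1}),\,T_0(\tfrac{\lambda}{\lambda^2-1})\bigr]$, whose left endpoint is strictly positive; Lemma~\ref{consecutive zeros remark} then gives a uniform bound on zero-runs. The free binary choice at each return to the subinterval yields the continuum. Your ``block'' alternative at the end gestures towards this kind of control but does not supply it; the period-two interval is precisely the missing mechanism.
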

\begin{proof}
By a similar argument to that given in Proposition \ref{prop2} it suffices to show that for any $\lambda\in(1,\frac{1+\sqrt{5}}{2})$ there exists a continuum of $\lambda$-expansions of $1$ which contain two consecutive ones and a bounded number of consecutive zeros.

Fix $\lambda \in (1,\frac{1+\sqrt{5}}{2})$. Let us consider the interval $[\frac{1}{\lambda^{2}-1},\frac{\lambda}{\lambda^{2}-1}].$  The significance of the endpoints $\frac{1}{\lambda^{2}-1}$ and $\frac{\lambda}{\lambda^{2}-1}$ is that $$T_{0}\Big(\frac{1}{\lambda^{2}-1}\Big)=\frac{\lambda}{\lambda^{2}-1} \textrm{ and } T_{1}\Big(\frac{\lambda}{\lambda^{2}-1}\Big)=\frac{1}{\lambda^{2}-1}.$$ In other words they form a period two orbit. Moreover, by the monotonicity of the maps $T_{0}$ and $T_{1},$ it follows that any $x\in(0,\frac{1}{\lambda-1})$ is eventually mapped into $[\frac{1}{\lambda^{2}-1},\frac{\lambda}{\lambda^{2}-1}]$ by repeatedly applying $T_{0}$ or $T_{1}.$  It is easy to show that $\frac{1}{\lambda^{2}-1}>\frac{1}{\lambda}$ and  $\frac{\lambda}{\lambda^{2}-1}<\frac{1}{\lambda(\lambda-1)}$ for all  $\lambda \in (1,\frac{1+\sqrt{5}}{2}).$ As a consequence of this $[\frac{1}{\lambda^{2}-1},\frac{\lambda}{\lambda^{2}-1}]\subsetneq \mathcal{S}_{\lambda}.$ The above observations will be critical in building our continuum of $\lambda$-expansions.

We now construct our set of $\lambda$-expansions. Each of these $\lambda$-expansions will share the same initial block of zeros and ones. This initial block will contain two consecutive ones. Let us start by constructing this initial block. For $\lambda\in(1,\frac{1+\sqrt{5}}{2})$ we have $1<\frac{\lambda}{\lambda^{2}-1}.$ We repeatedly apply $T_{0}$ to $1$ until it is mapped into $(\frac{\lambda}{\lambda^{2}-1},\frac{1}{\lambda-1}).$ Let $n\in\mathbb{N}$ be such that $T^{n}_{0}(1)\in (\frac{\lambda}{\lambda^{2}-1},\frac{1}{\lambda-1}).$ By the monotonicity of the map $T_{1}$ and using the fact that $T_{1}^{2}(\frac{\lambda}{\lambda^{2}-1})>0,$ we deduce that $(T_{1}^{2}\circ T^{n}_{0})(1)\in (0,\frac{1}{\lambda-1}).$ The sequence $((0)^{n},1,1)$ will be the initial block shared by of all of the $\lambda$-expansions we construct. Where $(\cdot)^{n}$ denotes the $n$-fold concatenation.

We now show that there exists a continuum of $\lambda$-expansions of $1$ beginning with $((0)^{n},1,1)$ and for which the number of consecutive zeros is bounded. We have that $(T_{1}^{2}\circ T^{n}_{0})(1)\in (0,\frac{1}{\lambda-1}),$ by repeatedly applying either $T_{0}$ or $T_{1}$ it is clear that $(T_{1}^{2}\circ T^{n}_{0})(1)$ is eventually mapped into  $[\frac{1}{\lambda^{2}-1},\frac{\lambda}{\lambda^{2}-1}].$ Let $i$ be the unique element of $\{0,1\}$ and $m\in\mathbb{N}$ be the unique minimal natural number such that $$(T_{i}^{m}\circ T_{1}^{2}\circ T^{n}_{0})(1)\in\Big[\frac{1}{\lambda^{2}-1},\frac{\lambda}{\lambda^{2}-1}\Big].$$ As $[\frac{1}{\lambda^{2}-1},\frac{\lambda}{\lambda^{2}-1}]\subsetneq \mathcal{S}_{\lambda}$ both $$(T_{0}\circ T_{i}^{m}\circ T_{1}^{2}\circ T^{n}_{0})(1)\in \Big(0,\frac{1}{\lambda-1}\Big)\textrm{ and }(T_{1}\circ T_{i}^{m}\circ T_{1}^{2}\circ T^{n}_{0})(1)\in \Big(0,\frac{1}{\lambda-1}\Big).$$ In fact both of these images are contained within the interval $[T_{1}(\frac{1}{\lambda^{2}-1}),T_{0}(\frac{\lambda}{\lambda^{2}-1})]$. There exists $m',m''\in\mathbb{N}$ such that  $$(T_{1}^{m'}\circ T_{0}\circ T_{i}^{m}\circ T_{1}^{2}\circ T^{n}_{0})(1)\in\Big[\frac{1}{\lambda^{2}-1},\frac{\lambda}{\lambda^{2}-1}\Big]\textrm{ and }(T_{0}^{m''}\circ T_{1}\circ T_{i}^{m}\circ T_{1}^{2}\circ T^{n}_{0})(1)\in\Big[\frac{1}{\lambda^{2}-1},\frac{\lambda}{\lambda^{2}-1}\Big].$$ As such we can apply $T_{0}$ and $T_{1}$ to both $(T_{1}^{m'}\circ T_{0}\circ T_{i}^{m}\circ T_{1}^{2}\circ T^{n}_{0})(1)$ and $(T_{0}^{m''}\circ T_{1}\circ T_{i}^{m}\circ T_{1}^{2}\circ T^{n}_{0})(1).$ Again all of these images are contained in
$[T_{1}(\frac{1}{\lambda^{2}-1}),T_{0}(\frac{\lambda}{\lambda^{2}-1})]$ and we can repeat the above steps. Moreover we can repeat this entire procedure indefinitely, with the all of the orbits of $1$ never leaving $[T_{1}(\frac{1}{\lambda^{2}-1}),T_{0}(\frac{\lambda}{\lambda^{2}-1})].$ It is clear that by repeatedly applying this procedure we generate a continuum of elements of $\Omega_{\lambda}(1)$ and therefore by Lemma \ref{Bijection lemma} a continuum of elements of $\Sigma_{\lambda}(1)$. To see that each of these $\lambda$-expansions contain a bounded number of consecutive zeros, we highlight the fact each element of our construction maps $1$ into $[T_{1}(\frac{1}{\lambda^{2}-1}),T_{0}(\frac{\lambda}{\lambda^{2}-1})]$ after a finite number of steps, and once it is mapped into this interval it never leaves never leaves it. Since $T_{1}(\frac{1}{\lambda^{2}-1})>0$ we satisfy the hypothesis of Lemma \ref{consecutive zeros remark} and we can conclude our result.

\end{proof}
Adapting the proof of Theorem 1.3 from \cite{Baker2}, it can be shown that for $\lambda\in(1,\frac{1+\sqrt{5}}{2})$ the set of $\lambda$-expansions of $1$ containing two consecutive ones and a bounded number of consecutive zeros is a set of positive Hausdorff dimension when $\{0,1\}^{\mathbb{N}}$ is endowed with the metric stated in the introduction. The value $\log(\frac{1+\sqrt{5}}{2})$ appearing in Theorem \ref{continuum prop} is in fact optimal. As the following results shows.

\begin{proposition}
\label{golden prop}
The number of $S$-gap shifts with entropy $\log (\frac{1+\sqrt{5}}{2})$ is countably infinite. Moreover, an infinite subset of these $S$-gap shifts satisfy the specification property.
\end{proposition}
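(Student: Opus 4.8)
The plan is to use Proposition \ref{Bijection proposition} to translate the counting problem into a question about $\lambda$-expansions of $1$ for the golden ratio $\lambda = \frac{1+\sqrt{5}}{2}$. Recall that $\lambda$ satisfies $\lambda^2 = \lambda + 1$, equivalently $\frac{1}{\lambda} + \frac{1}{\lambda^2} = 1$, so the switch region is $\mathcal{S}_\lambda = [\frac{1}{\lambda}, \frac{1}{\lambda(\lambda-1)}] = [\frac{1}{\lambda}, \frac{1}{\lambda-1} \cdot \frac{1}{\lambda}]$, and one computes using $\lambda - 1 = \frac{1}{\lambda}$ that $\frac{1}{\lambda} = \frac{1}{\lambda-1} - 1$; in particular $T_1(\frac{1}{\lambda-1}) = \lambda \cdot \frac{1}{\lambda-1} - 1 = \frac{1}{\lambda-1}$, so $\frac{1}{\lambda-1}$ is the fixed point of $T_1$. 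The key structural fact I would establish is that for the golden ratio, once the orbit of $1$ under a sequence of maps $T_0, T_1$ enters the switch region $\mathcal{S}_\lambda$, it must leave it immediately and in fact is quite constrained: the images $T_0(\mathcal{S}_\lambda)$ and $T_1(\mathcal{S}_\lambda)$ are disjoint from $\mathcal{S}_\lambda$ except possibly at an endpoint. Concretely, the only way to avoid the orbit eventually landing at the fixed point $\frac{1}{\lambda-1}$ of $T_1$ (which produces the tail $111\ldots$, forbidden since it is not summable to something staying in $I_\lambda$ — actually $\frac{1}{\lambda-1}$ is fine, $T_1$ fixes it, giving the expansion with all $1$'s from some point on) is tightly restricted.

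The cleaner approach: I would enumerate directly. Every $\lambda$-expansion of $1$ has the form $(a_j)_{j=1}^\infty$ with $\sum a_j \lambda^{-j} = 1$. Using Lemma \ref{Bijection lemma}, this corresponds to a trajectory of $1$ staying in $I_\lambda$. First I would show the greedy expansion of $1$ is $110000\ldots$ (since $\frac{1}{\lambda} + \frac{1}{\lambda^2} = 1$), and then analyze all other expansions by tracking where the first "choice" (entry into $\mathcal{S}_\lambda$) can occur. I claim that after applying some block, $1$ lands in $\mathcal{S}_\lambda$ only at specific points, and from each such point the two branches $T_0, T_1$ either terminate the expansion uniquely or lead back to a point previously visited. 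The upshot I would aim for is: the set of $\lambda$-expansions of $1$ is in bijection with something like finite words over a small alphabet followed by a periodic tail, giving a countably infinite set. Via Proposition \ref{Bijection proposition} this gives countably infinitely many $S$-gap shifts with entropy $\log\lambda$.

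For the "moreover" clause, I would exhibit an explicit infinite family. By Proposition \ref{prop1}(4), an $S$-gap shift has specification iff $\sup_i |n_{i+1} - n_i| < \infty$ and $\gcd\{n+1 : n \in S\} = 1$; by Proposition \ref{Bijection proposition} this corresponds to a $\lambda$-expansion of $1$ with a bounded number of consecutive zeros and with two consecutive ones appearing (to kill the gcd — since two consecutive $1$'s at positions $j, j+1$ give gaps $n = j-1$ and $n = 0$... actually one needs gaps whose successors are coprime). The family $1^k 0 1^k 0 1^k 0 \cdots$ for... no — I would instead use expansions like $1 1 0^{s} 1 1 0^{s} 1 1 0^s \cdots$ won't sum to $1$ in general, so I'd use the following: take the greedy expansion $1 1 0^\infty$ and modify the tail. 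Concretely, pick any $N$; since $T_1^2(1) = 0$, one has the expansion $1 1 0^\infty$, and more generally one can arrange expansions that agree with $110^\infty$ except substituting, at a single late block, a $0$ by a valid detour returning to the fixed point. Each such choice gives a distinct expansion with two consecutive ones and bounded zero-runs, hence a distinct $S$-gap shift with the specification property.

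\emph{Expected main obstacle.} The hard part will be the exhaustive analysis of the golden-ratio trajectory of $1$ to prove the count is \emph{exactly} countable — i.e., ruling out a continuum. This requires showing the switch region is visited only finitely often along any "new" branch, or more precisely that the tree of expansions of $1$ has only countably many infinite branches, which for the golden ratio follows from the fact that $\mathcal{S}_\lambda$ is a single small interval disjoint from its forward images under $T_0$ and $T_1$; pinning down this disjointness and its consequences for the branching structure is the delicate step, and contrasts sharply with the $\lambda < \frac{1+\sqrt 5}{2}$ case of Theorem \ref{continuum prop} where the period-two orbit sits strictly inside $\mathcal{S}_\lambda$.
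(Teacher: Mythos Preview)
Your overall strategy---reduce via Proposition~\ref{Bijection proposition} to counting the $\lambda$-expansions of $1$ for $\lambda=\frac{1+\sqrt5}{2}$, then pick out an infinite subfamily satisfying the hypotheses of Proposition~\ref{prop1}(4)---is exactly the paper's approach. The difference is that the paper does not carry out the orbit analysis you sketch: it simply cites the result of Erd\H{o}s, Horv\'ath and Jo\'o that
\[
\Sigma_{\frac{1+\sqrt5}{2}}(1)=\Big\{(10)^\infty\Big\}\cup\Big\{(10)^n 11\,0^\infty:n\ge 0\Big\}\cup\Big\{(10)^n 0\,1^\infty:n\ge 0\Big\},
\]
which is manifestly countably infinite. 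Your plan to prove this yourself by tracking entries into $\mathcal{S}_\lambda$ is feasible (your observation that $T_0(\mathcal{S}_\lambda)$ and $T_1(\mathcal{S}_\lambda)$ meet $\mathcal{S}_\lambda$ only at endpoints is the right mechanism) and would in effect reprove the Erd\H{o}s--Horv\'ath--Jo\'o theorem; citing it is shorter.

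There is, however, a genuine gap in your ``moreover'' argument. You propose to obtain infinitely many expansions with bounded zero-runs and two consecutive ones by starting from $11\,0^\infty$ and ``substituting, at a single late block, a $0$ by a valid detour.'' This cannot work: you correctly note $T_1^2(1)=0$, but once the orbit reaches $0$ the only admissible map is $T_0$ (since $T_1(0)=-1\notin I_\lambda$), so the tail is forced to be $0^\infty$ and no detour exists. The paper instead uses the family $(10)^n 0\,1^\infty$: here the orbit eventually reaches the fixed point $\frac{1}{\lambda-1}$ of $T_1$, giving a tail of all ones. For each $n$ the corresponding $S$ contains $0$ (so $\gcd\{n+1:n\in S\}=1$) and the consecutive-zero count is bounded by $2$, so Proposition~\ref{prop1}(4) applies. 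With the explicit Erd\H{o}s--Horv\'ath--Jo\'o list in hand this family is immediate; without it, the family you need to aim for is one whose orbit lands on the $T_1$-fixed point $\frac{1}{\lambda-1}$, not on $0$.
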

\begin{proof}
In \cite{EHJ} Erd\H{o}s, Horv\'ath, Jo\'o showed that
$$\Sigma_{\frac{1+\sqrt{5}}{2}}(1)=\Big\{((10)^{\infty}), ((10)^{n},1,1,(0)^{\infty}), ((10)^{n},0,(1)^{\infty})\textrm{ for some } n\in\mathbb{N}\Big\}.$$ Where in the above $(\cdot)^{\infty}$ denotes the infinite concatenation. This set is clearly countable, so by Proposition \ref{Bijection proposition} we can deduce the first part of our result. To determine the second part, we observe that each expansion of the form $((10)^{n},0,(1)^{\infty})$ defines a unique $S$-gap shift which has the specification property. This follows from the same argument given in Proposition \ref{prop2} and Theorem \ref{continuum prop}.
\end{proof}
Proposition \ref{golden prop} in fact holds for the logarithm of any multinacci number. Multinacci numbers are defined to be the unique real solutions to equations of the form $x^{n}=x^{n-1}+\cdots +x +1$ with modulus strictly greater than $1$.

We now state a result which describes the situation for a Lebesgue generic $q\in(0,1).$ In \cite{Sch} Schmeling showed that for almost every $\lambda\in(1,2)$ the orbit of $1$ under the greedy map $G$ visited any interval in $[0,1)$ with positive frequency. In fact Schmeling proved something much stronger than this, but for now we restrict ourselves to this weaker statement. This result implies that the orbit of $1$ visits the switch region $S_{\lambda}$ with positive frequency. Therefore the set $\Sigma_{\lambda}(1)$ is always at least countably infinite for almost every $\lambda\in(1,2)$. The following statement is a consequence of this.

\begin{theorem}
\label{almost everywhere theorem}
For almost every $q\in(0,1)$ there exists a countable infinite of $S$-gap shifts which have the specification property and entropy $q$.
\end{theorem}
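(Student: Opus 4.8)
The plan is to combine Proposition~\ref{Bijection proposition} with Schmeling's result on the dynamics of the greedy map. By Proposition~\ref{prop1}, an $S$-gap shift has the specification property precisely when $\sup_i |n_{i+1}-n_i| < \infty$ together with $\gcd\{n+1 : n\in S\}=1$; via Proposition~\ref{Bijection proposition} this translates into finding, for a given $\lambda\in(1,2)$, a $\lambda$-expansion $(a_j)_{j=1}^\infty$ of $1$ which contains two consecutive $1$'s (so the gcd condition is automatic) and has a bounded number of consecutive zeros. So it suffices to show that for almost every $\lambda\in(1,2)$ there are countably infinitely many such expansions of $1$.

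First I would invoke Schmeling's theorem: for almost every $\lambda\in(1,2)$, the orbit of $1$ under the greedy map $G$ visits the switch region $\mathcal{S}_\lambda$ with positive lower frequency, and in particular enters $\mathcal{S}_\lambda$ infinitely often. Fix such a $\lambda$ and let $1 = G^{k_1}(1) \in \mathcal{S}_\lambda$, $G^{k_2}(1)\in\mathcal{S}_\lambda,\ldots$ be the successive return times. At each such time the greedy digit is $1$ but we have a genuine choice: applying $T_0$ instead of $T_1$ also keeps the orbit in $I_\lambda$. The idea is to run the greedy map from $1$ but, at the $m$-th visit to $\mathcal{S}_\lambda$ (for a fixed $m$), deviate to $T_0$, thereafter following the greedy map again. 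Since after $G^{k_m}(1)$ the point $T_0(G^{k_m}(1))$ still lies in $I_\lambda$ and indeed (because $G^{k_m}(1)\in\mathcal S_\lambda$) in $(0,\tfrac1{\lambda-1})$, the greedy map applied to it eventually lands in $[0,1)$ and stays there, so by Lemma~\ref{consecutive zeros remark} (with $\delta$ a lower bound below $[0,1)$ on the tail of the orbit — here one should be slightly careful, as greedy orbits can approach $0$) the resulting expansion has a bounded number of consecutive zeros. Distinct choices of $m$ give distinct expansions, because they agree with the greedy expansion up to the $m$-th return and then differ, so this yields countably infinitely many expansions of $1$.

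The cleaner route, and the one I would actually follow to sidestep the "orbit may approach $0$" issue, is this: for a.e.\ $\lambda$, pick the \emph{first} visit of $G^n(1)$ to $\mathcal S_\lambda$, deviate once to $T_0$, obtaining a point $y\in(0,\tfrac1{\lambda-1})$, and then apply the \emph{lazy} map $L$ to $y$. By the Corollary following Lemma~\ref{consecutive zeros remark}, the lazy expansion of any $y\in(0,\tfrac1{\lambda-1}]$ has a bounded number of consecutive zeros; and since the lazy orbit of $y$ eventually enters $(\tfrac{2-\lambda}{\lambda-1},\tfrac1{\lambda-1}]$ it will itself pass through $\mathcal S_\lambda$ (as $\tfrac{2-\lambda}{\lambda-1}<\tfrac1{\lambda(\lambda-1)}$), so we may re-insert a $T_0$-deviation arbitrarily far along and resume the lazy map; each such choice produces a fresh expansion of $1$, all of them ending in a lazy tail and hence all with boundedly many consecutive zeros, and all containing $11$ because $1 = G^{n_1}(1)\in\mathcal S_\lambda$ forces a $1$ immediately before and the greedy block $a_1\cdots a_{n_1}1$ together with the freedom of digits guarantees the pair. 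This produces the required countable family.

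The main obstacle is the book-keeping around Lemma~\ref{consecutive zeros remark}: one must ensure the relevant orbit is bounded away from $0$ by a \emph{uniform} $\delta$ so that a single bound $N$ on the run-length of zeros works for all elements of the family. Routing everything through the lazy map handles this, since its attracting interval $(\tfrac{2-\lambda}{\lambda-1},\tfrac1{\lambda-1}]$ is bounded away from $0$ by a constant depending only on $\lambda$, and once a point is mapped into it the orbit never leaves; after that one only needs finitely many initial greedy/lazy steps, which contribute only finitely many extra consecutive zeros. The transcription back to $S$-gap shifts via Propositions~\ref{prop1} and~\ref{Bijection proposition} is then immediate, and taking logarithms sends "almost every $\lambda\in(1,2)$" to "almost every $q=\log\lambda\in(0,1)$" since $\lambda\mapsto\log\lambda$ is bi-Lipschitz on $(1,2)$.
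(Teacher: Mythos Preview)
Your outline matches the paper's, which in fact gives no detailed proof here: it simply records that Schmeling's theorem provides, for almost every $\lambda$, infinitely many visits of the greedy orbit of $1$ to the switch region, and that completing each branch to an expansion with bounded runs of zeros (hence specification) is ``analogous to'' the argument of Theorem~\ref{continuum prop}. Your lazy-tail completion is exactly such an argument, and the passage from almost every $\lambda$ to almost every $q=\log\lambda$ via bi-Lipschitzness is correct.

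One point to tighten. Your justification that each constructed expansion contains the block $11$ is not right as written: the sentences ``$1 = G^{k_1}(1)\in\mathcal S_\lambda$'' and ``$G^{n_1}(1)\in\mathcal S_\lambda$ forces a $1$ immediately before'' do not parse, and a single visit to $\mathcal S_\lambda$ does not by itself create two consecutive $1$'s. The clean repair, in the spirit of Proposition~\ref{prop2} and Theorem~\ref{continuum prop}, is to note that for $\lambda>\frac{1+\sqrt5}{2}$ one has $1\geq\frac1\lambda$ and $\lambda-1\geq\frac1\lambda$, so the greedy expansion of $1$ already begins with $11$; you may therefore run the greedy map from $1$, branch with $T_0$ at the $m$-th visit to $\mathcal S_\lambda$ (any $m\geq 1$), and then switch to the lazy map. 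The initial $11$ handles the gcd condition uniformly across all $m$, and the lazy tail lives in $(\frac{2-\lambda}{\lambda-1},\frac1{\lambda-1}]$, so Lemma~\ref{consecutive zeros remark} applies with a single $\delta$ for the whole family. For $\lambda\leq\frac{1+\sqrt5}{2}$ nothing needs to be proved, since Theorem~\ref{continuum prop} already produces uncountably many such $S$-gap shifts.
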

We do not include the details of the proof of this theorem. By Schmeling's result we know that almost every $q\in(0,1)$ must have a countable infinite of $S$-gap shifts with entropy $q$. Showing that we may further assert that they satisfy the specification property relies on combinatorial arguments analogous to those given in Theorem \ref{continuum prop}.

\subsection{Univoque bases and entropies achieved by a unique $S$-gap shift}
It is a consequence of Proposition \ref{Bijection proposition} that if $1$ has a unique $\lambda$-expansion then there exists a unique $S$-gap shift with entropy $\log \lambda.$ If $\lambda\in(1,2)$ is such that $1$ has a unique $\lambda$-expansion then $\lambda$ is called a \textbf{univoque base}. We denote the set of univoque bases by $U$. The set $U$ has been studied thoroughly from the perspective of expansions in non-integer bases, see \cite{DaKa,DvKom,KomLor, KomLor2} and the references therein.
Two of the standout results regarding the set $U$ are the following due to Dar\'{o}czy and  I. Katai \cite{DaKa} and Komornik and Loreti \cite{KomLor}.
\begin{theorem}[Dar\'{o}czy and  I. Katai]
$U$ has Hausdorff dimension $1$.
\end{theorem}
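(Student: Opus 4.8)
The plan is to prove that $U$ has Hausdorff dimension $1$ by exhibiting, for every $\varepsilon > 0$, a subset of $U$ whose Hausdorff dimension exceeds $1 - \varepsilon$. The natural mechanism is to work near the base $\lambda = 2$ (equivalently, large entropy), where the set of univoque bases becomes ``fat'', and to engineer a Cantor-like family of univoque bases using the dynamical description of unique expansions via the switch region $\mathcal{S}_{\lambda}$.

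First I would recall the characterization of univoque bases in the language already set up in the excerpt: $\lambda \in (1,2)$ is a univoque base precisely when $1$ has a unique $\lambda$-expansion, which by the discussion following Lemma~\ref{Bijection lemma} happens if and only if the orbit of $1$ under the greedy map $G$ (equivalently, under any admissible concatenation of $T_0$ and $T_1$) is never mapped into the switch region $\mathcal{S}_{\lambda} = [\frac{1}{\lambda}, \frac{1}{\lambda(\lambda-1)}]$. Equivalently, writing $(a_j)_{j=1}^{\infty}$ for the greedy expansion of $1$, one has the Parry-type admissibility condition that the greedy expansion of $1$ is ``self-admissible'' in a strict sense; this recasts membership in $U$ as a combinatorial condition on a single sequence $(a_j(\lambda))_{j=1}^\infty$ that depends continuously (and monotonically, in the appropriate ordering) on $\lambda$.

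The key step is the construction. I would fix a large integer $k$ and consider bases $\lambda$ close to $2$ whose greedy expansion of $1$ begins with a long block of $1$'s (which forces $\lambda$ near $2$) and thereafter is built by freely concatenating two carefully chosen finite words $w_0, w_1 \in \{0,1\}^{k}$, each of which contains enough $1$'s that no admissible tail can ever re-enter $\mathcal{S}_{\lambda}$ — here Lemma~\ref{consecutive zeros remark} (bounding runs of zeros keeps orbits bounded away from $0$) and its mirror image near $\frac{1}{\lambda-1}$ are exactly the tools that guarantee the orbit of $1$ avoids the switch region. Each infinite sequence in $\{w_0, w_1\}^{\mathbb{N}}$ corresponds, via $\sum a_j \lambda^{-j} = 1$, to a distinct univoque base $\lambda$; the map from sequences to bases is injective and bi-Hölder on this Cantor set because $\lambda \mapsto (a_j(\lambda))$ is monotone and the $\lambda$'s involved lie in a fixed compact subinterval of $(1,2)$ on which $x \mapsto \sum a_j x^{-j}$ has derivative bounded above and below. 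A standard mass-distribution / self-similar-set computation then shows this Cantor subset of $U$ has Hausdorff dimension at least $\frac{\log 2}{k \log(2/\delta)}$ for a $\delta$ that can be taken arbitrarily close to $2$; letting $k$ be chosen in tandem so that this quantity tends to $1$ gives $\HD(U) \geq 1 - \varepsilon$ for every $\varepsilon$, hence $\HD(U) = 1$ since $U \subseteq (1,2)$.

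The main obstacle I anticipate is the bi-Hölder (or at least Hölder in one direction) control of the coding map $\pi : \{w_0, w_1\}^{\mathbb{N}} \to U$: transferring a dimension lower bound from the symbolic Cantor set to the set of actual bases requires that $\pi^{-1}$ not collapse dimension, which in turn needs a quantitative lower bound on how much $\lambda$ must change when the symbolic sequence changes in a given coordinate. This is where monotonicity of the greedy expansion in $\lambda$ and a uniform lower bound on $\big|\frac{d}{d\lambda}\sum a_j \lambda^{-j}\big|$ over the relevant range of $\lambda$ must be combined; the estimate degrades as $\lambda \to 2$, so the choice of the word length $k$, the alphabet words $w_0, w_1$, and how close to $2$ one works must all be balanced against each other, and making that balancing act precise is the technical heart of the argument. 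An alternative that sidesteps some of this is to invoke the known structure of $U$ as a set containing a homeomorphic copy of a full shift near $2$ (the Komornik--Loreti/Glendinning--Sidorov picture), but the self-contained construction above is the route I would write out in detail.
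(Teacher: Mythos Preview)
The paper does not prove this theorem at all: it is quoted from Dar\'{o}czy and Katai \cite{DaKa} and used as a black box to deduce Theorem~\ref{DaKa S gap theorem} (via the bi-Lipschitz map $\log$ on $[\lambda_{KL},2)$). So there is no ``paper's proof'' to compare your proposal against; what follows is a comment on the proposal itself.

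Your outline has a genuine gap in the counting. With only \emph{two} words $w_0,w_1$ of length $k$, the symbolic Cantor set $\{w_0,w_1\}^{\mathbb N}$ has $2^n$ cylinders at depth $n$, each of which, after transferring to bases via $\sum a_j\lambda^{-j}=1$ with a bi-H\"older coding map, corresponds to an interval of length comparable to $\lambda^{-nk}$. The resulting Hausdorff dimension is therefore of order $\dfrac{\log 2}{k\log\lambda}$, and since every univoque base satisfies $\lambda\ge\lambda_{KL}\approx 1.787$, this is at most $\dfrac{1}{k\log_2\lambda_{KL}}<\dfrac{2}{k}$. Taking $k$ large makes this tend to $0$, not to $1$; no choice of ``$\delta$ close to $2$'' can rescue this, and the displayed formula $\dfrac{\log 2}{k\log(2/\delta)}$ does not correspond to the actual similarity dimension of the construction you describe.

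What the Dar\'{o}czy--Katai argument (and the related Glendinning--Sidorov picture you mention) actually needs is not two words of length $k$ but a subshift of entropy close to $\log 2$: for $\lambda$ close to $2$ one shows that every sequence avoiding $0^m$ and $1^m$ (for $m$ large depending on $\lambda$) is the unique $\lambda$-expansion of $1$ for some univoque $\lambda$, and the set of such sequences has entropy $\log 2 - o(1)$ as $m\to\infty$. Transferring that full-entropy family through the (bi-H\"older) coding map is what yields $\HD(U)\ge 1-\varepsilon$. Your framework is salvageable if you replace ``two words $w_0,w_1$'' by ``all words of length $k$ with no run of $0$'s or $1$'s longer than $m$'' and then let $m\to\infty$; the bi-H\"older step you flagged as the main obstacle is real but standard, whereas the branching count is where your current sketch actually fails.
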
$U$ has Lebesgue measure zero by Theorem \ref{almost everywhere theorem}.
\begin{theorem}[Komornik and Loreti]
\label{KomLor theorem}
The smallest element of $U$ is $\lambda_{KL}\approx 1.787$, where $\lambda_{KL}$ is the unique solution to the equation $$\sum_{j=1}^{\infty}\frac{\omega_{j}}{\lambda^{j}}=1.$$ Here $(\omega_{j})_{j=0}^{\infty}$ is the Thue-Morse sequence.
\end{theorem}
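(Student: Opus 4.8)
The plan is to establish the sharp statement $\lambda_{KL}=\min U$ by combining the classical lexicographic description of univoque bases with a minimality property of the Thue--Morse sequence. For $\lambda\in(1,2)$ let $(\alpha_i(\lambda))_{i\geq1}\in\{0,1\}^{\mathbb{N}}$ be the quasi-greedy expansion of $1$, i.e.\ the lexicographically largest sequence not ending in $0^{\infty}$ with $\sum_{i\geq1}\alpha_i(\lambda)\lambda^{-i}=1$. Write $\overline a=1-a$, extended coordinatewise, $\sigma$ for the shift, and $\prec$ for lexicographic order. I would use two standard facts (Parry; Erd\H{o}s--Jo\'o--Komornik; Dar\'oczy--K\'atai): (i) $\lambda\mapsto(\alpha_i(\lambda))$ is strictly increasing; and (ii) $\lambda\in U$ if and only if $\overline{(\alpha_i(\lambda))_{i\geq1}}\prec\sigma^{n}((\alpha_i(\lambda))_{i\geq1})\prec(\alpha_i(\lambda))_{i\geq1}$ for every $n\geq1$. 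In the terminology of the switch region $\mathcal{S}_{\lambda}$ used above, the right inequality says that the greedy orbit of $1$ never enters the interior of $\mathcal{S}_{\lambda}$, the left inequality is the symmetric lazy condition, and together they assert that $1$ is never mapped into the interior of $\mathcal{S}_{\lambda}$ under a finite concatenation of $T_0$'s and $T_1$'s, which is exactly the statement that $1$ has a unique $\lambda$-expansion. By (i), finding $\min U$ reduces to identifying the $\prec$-smallest sequence that can occur as $(\alpha_i(\lambda))$ for $\lambda\in U$.

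Next I would check that the Thue--Morse sequence realises this minimum. Let $(\tau_i)_{i\geq1}$ be the Thue--Morse word normalised so that $\tau_1=1$; concretely $\tau_1=1$ and $\tau_{2i}=\tau_i$, $\tau_{2i+1}=\overline{\tau_i}$ for all $i\geq1$. Since $x\mapsto\sum_{i\geq1}\tau_ix^{-i}$ is continuous and strictly decreasing on $(1,\infty)$, tends to $+\infty$ as $x\to1^{+}$, and at $x=2$ is strictly below $\sum_{i\geq1}2^{-i}=1$, there is a unique $\lambda_{KL}\in(1,2)$ with $\sum_{i\geq1}\tau_i\lambda_{KL}^{-i}=1$. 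Using the self-similarity encoded by the relations $\tau_{2i}=\tau_i$, $\tau_{2i+1}=\overline{\tau_i}$, one proves by induction on the block length $2^{k}$ that $\overline{(\tau_i)_{i\geq1}}\prec\sigma^{n}((\tau_i)_{i\geq1})\prec(\tau_i)_{i\geq1}$ for every $n\geq1$, the strictness coming from the aperiodicity of $(\tau_i)$. By (ii) this shows that $(\tau_i)=(\alpha_i(\lambda_{KL}))$ and that $\lambda_{KL}\in U$.

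The remaining, and I expect hardest, step is minimality: $\lambda\geq\lambda_{KL}$ for every $\lambda\in U$. Since any $\lambda\in U$ satisfies the non-strict inequalities $\overline{(c_i)_{i\geq1}}\preceq\sigma^{n}((c_i)_{i\geq1})\preceq(c_i)_{i\geq1}$ for all $n\geq0$ with $(c_i)=(\alpha_i(\lambda))$ (and in particular $c_1=1$), by (i) it suffices to prove that $(\tau_i)$ is the $\prec$-smallest sequence with $c_1=1$ enjoying these non-strict inequalities for all $n\geq0$. I would prove this by constructing the minimal such sequence digit by digit: having fixed an admissible prefix, append at each stage the smallest digit not forbidden by any of the constraints $\overline{(c_i)}\preceq\sigma^{n}((c_i))$ and $\sigma^{n}((c_i))\preceq(c_i)$, and show by induction on $k$ that after $2^{k}$ steps the prefix produced equals the length-$2^{k}$ prefix of $(\tau_i)$. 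The inductive step is where the substitution structure of Thue--Morse becomes essential: reading the constraint $\overline{(c_i)}\preceq\sigma^{2^{k}}((c_i))$ against the already-determined prefix forces the next block of length $2^{k}$ to begin like the reflection of that prefix, while minimality forbids anything larger, and the relations $\tau_{2i}=\tau_i$, $\tau_{2i+1}=\overline{\tau_i}$ are exactly what makes these two requirements consistent at every stage. The delicate points are (a) pinning down the strict-versus-non-strict inequalities correctly at the endpoints of $\mathcal{S}_{\lambda}$, which is why one works with quasi-greedy expansions and first identifies the closure of $U$, and (b) making the minimality induction genuinely airtight, since the constraints coming from different shifts $n$ interact and must be satisfied simultaneously throughout. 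Once both steps are in place, $\lambda_{KL}\in U$ together with $\lambda\geq\lambda_{KL}$ for all $\lambda\in U$ yields $\min U=\lambda_{KL}$, as required.
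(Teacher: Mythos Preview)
The paper does not prove this theorem. It is stated, with attribution to Komornik and Loreti \cite{KomLor}, as a known result from the literature on expansions in non-integer bases, and is then used (together with Proposition \ref{Bijection proposition}) to deduce the $S$-gap shift analogue in Theorem \ref{KomLor S gap theorem}. So there is nothing in the paper to compare your argument against: the authors simply cite the result.

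That said, your sketch is a faithful outline of the standard proof, essentially the approach of Komornik--Loreti and its later refinements (Allouche--Cosnard, de Vries--Komornik). The three ingredients you isolate --- the strictly increasing dependence of the quasi-greedy expansion on $\lambda$, the two-sided lexicographic characterisation of $U$, and the identification of the shifted Thue--Morse word as the $\prec$-minimal sequence satisfying $\overline{c}\preceq\sigma^{n}c\preceq c$ for all $n\geq 0$ --- are exactly the right ones, and you correctly flag the genuinely delicate points: handling the strict/non-strict boundary cases (which is why one passes through the closure $\overline{U}$ and quasi-greedy expansions), and making the block-doubling induction for minimality airtight. Your proposal is correct as a plan, but for the purposes of this paper no proof is required; a citation suffices.
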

The Thue-Morse sequence is defined iteratively as follows: $\omega_{0}=0$ and if $\omega_{i}$ is already defined for some $i\geq 0$ then $\omega_{2i}=\omega_{i}$ and $\omega_{2i+1}=1-\omega_{i}.$ See \cite{AllShall} for more on the Thue-Morse sequence. In \cite{AllCos} it was shown that $\lambda_{KL}$ was transcendental.

Rephrasing these theorems in terms of $S$-gap shifts via Proposition \ref{Bijection proposition}, the following theorems are immediate.

\begin{theorem}
\label{DaKa S gap theorem}
The set of $q\in (0,1)$ for which there exists a unique $S$-gap shift with entropy $q$ has Hausdorff dimension $1$.
\end{theorem}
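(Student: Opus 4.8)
The plan is to deduce Theorem \ref{DaKa S gap theorem} directly from Proposition \ref{Bijection proposition} together with the Dar\'oczy--Katai theorem that $U$ has Hausdorff dimension $1$, via a bi-Lipschitz change of variables. First I would recall the content of Proposition \ref{Bijection proposition}: for a fixed $\lambda\in(1,2)$ the set of $S$-gap shifts with entropy $\log\lambda$ is in bijection with $\Sigma_{\lambda}(1)$, the set of $\lambda$-expansions of $1$. In particular there is a \emph{unique} $S$-gap shift with entropy $\log\lambda$ if and only if $\Sigma_{\lambda}(1)$ is a singleton, i.e. if and only if $\lambda\in U$. Hence the set
$$E:=\{q\in(0,1): \text{there exists a unique $S$-gap shift with entropy }q\}$$
is precisely $E=\{\log\lambda:\lambda\in U\}$.

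Next I would exploit that the map $\lambda\mapsto\log\lambda$ is a real-analytic diffeomorphism of $(1,2)$ onto $(0,1)$ whose derivative $1/(\lambda\ln 2)$ is bounded above and below by positive constants on any compact subinterval, and more importantly is bounded on all of $(1,2)$ since $\lambda\in(1,2)$ gives $1/(2\ln 2)\le 1/(\lambda\ln 2)\le 1/\ln 2$. Therefore $\lambda\mapsto\log\lambda$ is bi-Lipschitz on $(1,2)$. Since Hausdorff dimension is invariant under bi-Lipschitz maps, we get $\HD(E)=\HD\{\log\lambda:\lambda\in U\}=\HD(U)=1$, where the last equality is the Dar\'oczy--Katai theorem quoted in the excerpt. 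This completes the argument.

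The only subtlety worth spelling out — and the one place I would be careful — is the equivalence ``unique $S$-gap shift with entropy $\log\lambda$'' $\iff$ ``$\lambda\in U$''. This needs Proposition \ref{Bijection proposition} to be applied in both directions: every $S$-gap shift of entropy $\log\lambda$ arises from some $(a_j)\in\Sigma_\lambda(1)$ (so uniqueness of the shift forces $|\Sigma_\lambda(1)|=1$), and conversely distinct sequences in $\Sigma_\lambda(1)$ yield distinct sets $S=\{j-1:a_j=1\}$ and hence genuinely distinct $S$-gap shifts (so $|\Sigma_\lambda(1)|\ge 2$ forces at least two shifts). Both halves are immediate from the fact that the map in Proposition \ref{Bijection proposition} is a bijection, so there is no real obstacle here — the theorem is essentially a translation of the Dar\'oczy--Katai result through the dictionary already established, and the ``main difficulty'' is simply noting that the translation $\lambda\mapsto\log\lambda$ preserves Hausdorff dimension.
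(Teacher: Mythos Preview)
Your proof is correct and follows essentially the same approach as the paper: identify the set $E$ with $\log U$ via Proposition \ref{Bijection proposition}, invoke the Dar\'oczy--Katai theorem $\HD(U)=1$, and transfer the dimension through the bi-Lipschitz map $\lambda\mapsto\log\lambda$. The paper states the bi-Lipschitz property only on $[\lambda_{KL},2)$ (which suffices since $U\subseteq[\lambda_{KL},2)$ by Theorem \ref{KomLor theorem}), whereas you observe it holds on all of $(1,2)$; either version works.
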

\begin{theorem}
\label{KomLor S gap theorem}
The smallest value of $q\in (0,1)$ for which there exists a unique $S$-gap shift with entropy $q$ is $\log \lambda_{KL}\approx 0.580.$
\end{theorem}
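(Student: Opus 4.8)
The plan is to simply transport the Komornik--Loreti theorem (Theorem \ref{KomLor theorem}) through the bijection of Proposition \ref{Bijection proposition}. Recall that Proposition \ref{Bijection proposition} gives a bijection between sequences $(a_j)_{j=1}^{\infty}\in\{0,1\}^{\mathbb{N}}$ satisfying $\sum_{j=1}^{\infty} a_j\lambda^{-j}=1$ and $S$-gap shifts with entropy $\log\lambda$, via $S=\{j-1:a_j=1\}$. Hence for a fixed $\lambda\in(1,2)$, the number of $S$-gap shifts with entropy $\log\lambda$ equals $\operatorname{card}\Sigma_{\lambda}(1)$. In particular, there is a \emph{unique} $S$-gap shift with entropy $\log\lambda$ precisely when $\operatorname{card}\Sigma_{\lambda}(1)=1$, i.e. when $\lambda\in U$ is a univoque base.

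First I would note that the entropy map $\lambda\mapsto\log\lambda$ from $(1,2)$ onto $(0,1)$ is a strictly increasing homeomorphism. Therefore the set of $q\in(0,1)$ admitting a unique $S$-gap shift of entropy $q$ is exactly the image $\{\log\lambda:\lambda\in U\}$ of the univoque set under this homeomorphism, and its infimum is $\log(\inf U)$. Since $\log$ is increasing and continuous, $\inf\{\log\lambda:\lambda\in U\}=\log(\inf U)$, and moreover the infimum is attained at a point of the image if and only if it is attained in $U$.

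Next I would invoke Theorem \ref{KomLor theorem}: the smallest element of $U$ is the Komornik--Loreti constant $\lambda_{KL}\approx 1.787$, and this minimum is attained, i.e. $\lambda_{KL}\in U$. Combining this with the previous paragraph, the smallest value of $q\in(0,1)$ for which there is a unique $S$-gap shift of entropy $q$ is $\log\lambda_{KL}$, and a numerical evaluation gives $\log\lambda_{KL}\approx 0.580$ (recall $\log$ here is base $2$, and $\log_2 1.787\approx 0.5375\cdot\ldots$—I would double-check the constant against the stated value $0.580$ and reconcile, since $\ln(1.787)/\ln 2 \approx 0.837$; the precise numeric should be recomputed from the defining Thue--Morse equation). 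The main (and essentially only) obstacle here is bookkeeping: confirming that the minimum of $U$ is genuinely achieved (so that "smallest value" is correct rather than merely an infimum), which is part of the Komornik--Loreti result, and getting the numerical value of $\log\lambda_{KL}$ right. All the structural content is already packaged in Proposition \ref{Bijection proposition} and Theorem \ref{KomLor theorem}, so the proof is a one-line translation plus a numerical remark.
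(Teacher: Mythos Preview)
Your proposal is correct and follows exactly the paper's approach: the paper states that Theorem~\ref{KomLor S gap theorem} ``is an obvious application of Proposition~\ref{Bijection proposition} and Theorem~\ref{KomLor theorem},'' which is precisely the translation you carry out. Your side remark about the numerical value is a legitimate observation (the quoted $0.580$ is $\ln\lambda_{KL}$ rather than $\log_2\lambda_{KL}$), but it does not affect the mathematical argument.
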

Theorem \ref{KomLor S gap theorem} is an obvious application of Proposition \ref{Bijection proposition} and Theorem \ref{KomLor theorem}. To prove Theorem \ref{DaKa S gap theorem} we have to be slightly more careful and appeal to the fact that $\log$ is a bi-Lipschitz map on $[\lambda_{KL},2).$

The results we have stated above are just two of the consequences of Proposition \ref{Bijection proposition} and the work of other authors on the set $U$. There are many more results which would follow immediately. We have restricted ourselves to Theorem \ref{DaKa S gap theorem} and Theorem \ref{KomLor S gap theorem} as a result of personal preference.

\medskip\noindent {\bf Acknowledgments.} The authors are grateful to Karl Petersen and Vaughn Climenhaga for their useful remarks.





\end{document}